\numberwithin{equation}{section}
\newtheorem {theorem}[equation]{Theorem}
\newtheorem*{theorem*}{Theorem}
\newtheorem {lemma}[equation]{Lemma}
\newtheorem {corollary} [equation]      {Corollary}
\newtheorem*{corollary*}{Corollary}
\newtheorem {proposition}[equation]     {Proposition}
\theoremstyle{definition}
\newtheorem {definition}[equation]{Definition}
\theoremstyle{remark}
\newtheorem {remark}[equation]{Remark}
\newtheorem {example}[equation]{Example}
\newtheorem {notation}[equation]{Notation}
\newcommand{\flag}{\mathrm{Flags}}
\newcommand{\init}{\mathrm{in}}
\newcommand{\exampleqed}{%
    {   
    \renewcommand{\qedsymbol}{$\lozenge$}%
    \qed%
    }
}
\author[M. Cummings]{Mike Cummings}
\author[S. Da Silva]{Sergio Da Silva}
\author[M. Harada]{Megumi Harada}
\author[J. Rajchgot]{Jenna Rajchgot}
\address[Mike Cummings]{
    Dept.\ of Mathematics and Statistics, 
    McMaster University, 
    1280 Main Street West, 
    Hamilton, Ontario L8S 4K1, Canada}
\email{cummim5@mcmaster.ca}
\address[Sergio Da Silva]{
    Dept.\ of Mathematics and Economics, 
    Virginia State University, 
    1 Hayden Drive,
    Petersburg, Virginia 23806, USA}
\email{sdasilva@vsu.edu, smd322@cornell.edu}
\address[Megumi Harada]{
    Dept.\ of Mathematics and Statistics, 
    McMaster University, 
    1280 Main Street West, 
    Hamilton, Ontario L8S 4K1, Canada}
\email{haradam@mcmaster.ca}
\address[Jenna Rajchgot]{
    Dept.\ of Mathematics and Statistics, 
    McMaster University, 
    1280 Main Street West, 
    Hamilton, Ontario L8S 4K1, Canada}
\email{rajchgot@math.mcmaster.ca}
\title[Gr\"obner geometry for regular nilpotent Hessenberg Schubert cells]{Gr\"obner geometry for \\regular nilpotent Hessenberg Schubert cells}
\date{\today}
\subjclass{}
\keywords{}
\begin{document}

\begin{abstract}
A regular nilpotent Hessenberg Schubert cell is the intersection of a regular nilpotent Hessenberg variety with a Schubert cell. In this paper, we describe a set of minimal generators of the defining ideal of a regular nilpotent Hessenberg Schubert cell in the type $A$ setting. We show that these minimal generators are a Gr\"obner basis for an appropriate lexicographic monomial order.
As a consequence, we obtain a new computational-algebraic proof, in type $A$, of Tymoczko's result that regular nilpotent Hessenberg varieties are paved by affine spaces.
In addition, we prove that these defining ideals are complete intersections, are geometrically vertex decomposable, and compute their Hilbert series. We also produce a Frobenius splitting of each Schubert cell that compatibly splits all of the regular nilpotent Hessenberg Schubert cells contained in it.
This work builds on, and extends, work of the second and third author on defining ideals of intersections of regular nilpotent Hessenberg varieties with the (open) Schubert cell associated to the Bruhat-longest permutation.
\end{abstract}

\maketitle

\section{Introduction} \label{sec:intro}

Hessenberg varieties are subvarieties of the full flag variety $\flag(\mathbb{C}^n)$.\footnote{In this manuscript, we restrict to the case of Lie type $A$, i.e., when the flag variety corresponds to the group $GL_n(\mathbb{C})$ (or $SL_n(\mathbb{C})$). Much can also be said about other Lie types.} They were first introduced to algebraic geometers by De Mari, Procesi, and Shayman \cite{DeMPS}, and their study lies in the intersection of algebraic geometry, representation theory, and combinatorics, among other research areas.  
For example: these varieties arise in the study of quantum cohomology of flag varieties, they are generalizations of the Springer fibers which arise in geometric representation theory, total spaces of families of suitable Hessenberg varieties support interesting integrable systems \cite{AbeCrooks}, and the study of some of their cohomology rings suggests that there is a rich Hessenberg analogue to the theory of Schubert calculus on $\flag(\mathbb{C}^n)$. 

The motivation of the present paper stems mainly from Schubert geometry.
Moreover, this manuscript can and should be viewed as a natural companion paper to the recent work \cite{DSH} of the second and third authors. Let us briefly recall the setting.
The main objects of discussion are the \textbf{local patches of Hessenberg varieties} --- i.e., intersections of Hessenberg varieties with certain choices of affine Zariski-open subsets of $\flag(\mathbb{C}^n)$. (Throughout this manuscript, we restrict to the so-called regular nilpotent case -- to be defined precisely below in Section~\ref{subsec:Hess}.) 
In particular, we may study the \textbf{local Hessenberg patch ideal}, denoted $I_{w_0,h}$, for the special case of a \textbf{regular nilpotent Hessenberg variety} (to be defined in Section~\ref{subsec:Hess}), intersected with the affine coordinate chart $w_0B^-B$ of $\flag(\mathbb{C}^n) \cong GL_n(\mathbb{C})/B$ centered at the permutation flag corresponding to the maximal element 
$w_0$ in $S_n$; this is the case studied in \cite{DSH}.  (Here $B$ is the usual Borel subgroup of upper-triangular invertible matrices in $GL_n(\mathbb{C})$ and $B^-$ is the opposite Borel of lower-triangular invertible matrices.) 
This affine coordinate chart is also a \textbf{Schubert cell}, $B(\text{Id})B/B$ of the identity permutation $\text{Id}\in S_n$.

The main contribution of this manuscript is that we prove analogues of the results of \cite{DSH} for \emph{all} possible \textbf{regular nilpotent Hessenberg Schubert cells} -- that is, for all possible intersections of regular nilpotent Hessenberg varieties, denoted $\text{Hess}(\mathsf N,h)$, with Schubert cells $BwB/B$, for any $w\in S_n$. 
It turns out that the perspectives used in \cite{DSH} can be treated in a similar manner to the $w_0$-chart, as long as we restrict our attention to the Hessenberg Schubert cells (instead of looking at the entire local Hessenberg patch). A rough statement of our main results, which is stated more precisely as Corollary~\ref{cor: init Jwh} and Theorem \ref{thm:subsetGB}, is as follows:

\begin{theorem*}
Let $n$ be a positive integer and  
let $h: [n] \to [n]$ be an indecomposable Hessenberg function.
Let $w\in S_n$ and suppose that the Hessenberg Schubert cell $\mathrm{Hess}(\mathsf N,h)\cap BwB/B$ is non-empty. 
Let $J_{w,h}\subseteq \mathbb{C}[BwB/B]$ denote the (radical) defining ideal of $\mathrm{Hess}(\mathsf N,h)\cap BwB/B$. Then there exists a lexicographic monomial order $<$ such that $\mathrm{in}_< (J_{w,h})$ is generated by $|\lambda_h|$-many indeterminates, where $\lambda_h$ is a partition associated to the Hessenberg function $h$. Moreover, there is a natural list of generators of $J_{w,h}$ which form a Gr\"obner basis for $J_{w,h}$ with respect to $<$. 
\end{theorem*}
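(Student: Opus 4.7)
The plan is to generalize the strategy of \cite{DSH}, which treated the case $w = w_0$, to arbitrary $w \in S_n$. I would start by setting up explicit affine coordinates on $BwB/B$: every flag in the Schubert cell has a unique matrix representative $M_w$ whose entries in certain ``pivot'' positions (determined by $w$) are fixed at zero or one, and whose remaining entries are polynomial coordinates $z_{i,j}$. I would then translate the Hessenberg condition $\mathsf{N} V_i \subseteq V_{h(i)}$ into explicit polynomial equations in these coordinates by computing $\mathsf{N} M_w$ and requiring that, for each $i$, its first $i$ columns lie in the span of the first $h(i)$ columns of $M_w$. This yields a natural generating set $\{f_{i,j}\}$ for $J_{w,h}$ indexed by the pairs $(i,j)$ with $j > h(i)$, whose total count is exactly $|\lambda_h|$ (the codimension of $\mathrm{Hess}(\mathsf{N},h)$ in $\mathrm{Flags}(\mathbb{C}^n)$).

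With the generators in hand, the heart of the proof is choosing a lexicographic order $<$ on the coordinate variables $z_{i,j}$ so that $\mathrm{in}_<(f_{i,j})$ is a single indeterminate, and so that distinct generators produce distinct leading variables. The strategy is to adapt the order from \cite{DSH}: traverse the constrained entries in a way (for instance, column-by-column from the rightmost column inward, or along anti-diagonals) that is compatible with both $w$ and $h$, so that each equation $f_{i,j}$ introduces exactly one ``new'' variable larger than all others appearing in that equation. Once this is verified, the leading monomials $\mathrm{in}_<(f_{i,j})$ are distinct indeterminates and hence pairwise coprime; by Buchberger's criterion, $S$-polynomials between generators with coprime leading monomials reduce to zero, so $\{f_{i,j}\}$ is automatically a Gr\"obner basis for $J_{w,h}$, and its initial ideal is generated by the $|\lambda_h|$ leading indeterminates, as required.

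The main obstacle is proving the ``leading term is a distinct indeterminate'' property in the full generality of arbitrary $w$. For $w = w_0$ the free entries of $M_w$ fill out the strictly lower-triangular region and the combinatorics of which coordinate dominates in each equation is relatively uniform; for general $w$, the pattern of pivot entries and free entries depends intricately on the inversion set of $w$, and one must identify, for each Hessenberg constraint at position $(i,j)$, a specific free coordinate that (a) actually appears in $f_{i,j}$ with nonzero coefficient, and (b) dominates every other coordinate appearing in $f_{i,j}$ under the chosen order. Establishing this requires a careful combinatorial analysis relating the positions of Hessenberg constraints, the shape of $\lambda_h$, and the pivot structure of $w$, and is presumably the step where the hypothesis $\mathrm{Hess}(\mathsf{N},h) \cap BwB/B \neq \emptyset$ enters essentially. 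Once this leading-term analysis is complete, the two conclusions of the theorem (the form of $\mathrm{in}_<(J_{w,h})$ and the Gr\"obner basis property) follow directly from standard Gr\"obner-theoretic facts.
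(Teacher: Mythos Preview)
Your high-level plan is sound and matches the paper's architecture: set up affine coordinates on the Schubert cell, extract generators $g^w_{k,\ell}$ indexed by $k > h(\ell)$ from the matrix condition $\Omega_w^{-1}\mathsf{N}\,\Omega_w \in H(h)$, exhibit a lexicographic order for which each generator has a distinct indeterminate as leading term, and conclude via Buchberger. You also correctly locate where the non-emptiness hypothesis must enter.

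The substantive difference is in how the leading-term step is carried out. You propose a direct combinatorial analysis for each $w$, tracking the pivot pattern against the Hessenberg constraints. The paper instead \emph{reduces to the already-solved $w_0$ case} and avoids any new combinatorics. Concretely, setting $v_w := w_0 w$, right multiplication by $v_w^{-1}$ embeds $X^w_\circ$ into the $w_0$-chart, inducing a ring map $\psi_w:\mathbb{C}[\mathbf{x}_{w_0}]\to\mathbb{C}[\mathbf{z}_w]$ (a relabelling followed by setting certain variables to zero). One checks that $g^w_{k,\ell} = \psi_w(f^{w_0}_{v_w(k),v_w(\ell)})$, so the cell generators are specializations of the $w_0$-chart generators. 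The monomial order $<^w_n$ is simply the pullback of the known order $<_n$ via $v_w$, and the entire leading-term analysis reduces to one check: that $\psi_w$ does not annihilate the known initial term $-x_{n+1-v_w(k),\,v_w(\ell)+1}$ of $f^{w_0}_{v_w(k),v_w(\ell)}$. This is exactly where non-emptiness is used, via the fixed-point criterion $w^{-1}(w(\ell)-1)\le h(\ell)$, which unwinds to say that the relevant variable lies outside the vanishing locus of $\psi_w$.

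What the paper's route buys is that no new inductive or column-by-column argument is needed: all the delicate work was done once in \cite{DSH} for $w_0$, and the general case is a two-line transport. Your direct approach would presumably succeed but would amount to reproving much of \cite{DSH} in a $w$-dependent form; the $\psi_w$ trick is the organizing idea that makes the general case essentially free.
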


The above results easily imply that each non-empty Hessenberg Schubert cell is isomorphic to an affine space $\mathbb{A}^{r_w - |\lambda_h|}$ where $r_w$ is the dimension of the Schubert cell $BwB/B$, $w\in S_n$. Using this, in Corollary \ref{cor:PavingByAffines}, we recover the known result that each regular nilpotent Hessenberg variety is paved by affine spaces. This was initially proved by Tymoczko in \cite{Tymoczko} and generalized to the regular (not necessarily nilpotent) case by Precup in \cite{Precup}. 
Tymoczko and Precup's proofs use the language of Lie theory (and apply beyond type $A$) and are delicate combinatorial arguments. Thus, one contribution of this paper is a new, quick proof of this paving-by-affines result in the type $A$ regular nilpotent case, using purely computational-algebraic methods.  

As in \cite{DSH}, our main theorem has a number of immediate consequences, including geometric vertex decomposability, Frobenius splitting, and Hilbert series. These applications are discussed in detail, and more precisely, in Section~\ref{sec:applications}. 

\begin{corollary*}
Let $n$ be a positive integer and  
let $h: [n] \to [n]$ be an indecomposable Hessenberg function.
Let $w\in S_n$ and suppose that the Hessenberg Schubert cell $\mathrm{Hess}(\mathsf N,h)\cap BwB/B$ is non-empty. 
Let $J_{w,h}\subseteq \mathbb{C}[BwB/B]$ denote the (radical) defining ideal of $\mathrm{Hess}(\mathsf N,h)\cap BwB/B$. Then, 
\begin{enumerate}
    \item each $J_{w,h}\subseteq \mathbb{C}[BwB/B]$ is a complete intersection ideal;
    \item the Hilbert series of $\mathbb{C}[BwB/B]/J_{w,h}$,  with respect to $\mathbb{Z}$-grading induced by the natural circle action on the Hessenberg Schubert cell, is given by 
    \[ 
    H_{R/J_{w,h}}(t)
    = \frac{
    \displaystyle
    \prod_{\substack{k > h(\ell) \\ v_w(k) > v_w(\ell) + 1}}
    (1 - t^{v_w(k) - v_w(\ell) - 1})
    }{
    \displaystyle
    \prod_{\substack{i < w(j) \\ j < w^{-1}(i)}} 
    \left( 1 - t^{w(j) - i} \right)
    }.
    \]
    
    \item each ideal $J_{w,h}$ is geometrically vertex decomposable;
    \item upon replacing $\mathbb{C}$ by a algebraically closed field $\mathbb{K}$ of positive characteristic, there is a natural Frobenius splitting of $\mathbb{K}[BwB/B]$ which compatibly splits all ideals $J_{w,h}\subseteq \mathbb{K}[BwB/B]$.
\end{enumerate}
\end{corollary*}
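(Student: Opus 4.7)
My plan is to deduce all four assertions from the Gröbner basis theorem stated above, exploiting the fact that $\mathrm{in}_<(J_{w,h})$ is generated by $|\lambda_h|$ of the natural coordinates on the polynomial ring $\mathbb{C}[BwB/B]$.

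For (1) and (2), observe that the initial ideal, being generated by $|\lambda_h|$ distinct indeterminates, is a complete intersection of codimension $|\lambda_h|$ whose $\mathbb{Z}$-graded Hilbert series has the shape $\prod_{s=1}^{|\lambda_h|}(1 - t^{d_s}) / \prod_{j=1}^{r_w}(1 - t^{e_j})$, where $d_s$ and $e_j$ are the degrees under the circle action of the $|\lambda_h|$ leading indeterminates and of the $r_w$ affine coordinates on $BwB/B$, respectively. Codimension and $\mathbb{Z}$-graded Hilbert series are invariant under Gröbner degeneration whenever the grading is refined by the monomial order, and the Gröbner basis has $|\lambda_h|$ elements matching the codimension; hence $J_{w,h}$ is generated by a regular sequence of that length (giving (1)) and has the same Hilbert series as its initial ideal (giving (2), after matching the degrees $d_s$ and $e_j$ with the combinatorial data $v_w(k)-v_w(\ell)-1$ and $w(j)-i$ indicated in the statement). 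For (4), the standard Frobenius splitting of $\mathbb{K}[BwB/B]$ defined by trace against $\prod y_i^{p-1}$, with the $y_i$ ranging over the affine coordinates, compatibly splits every coordinate subspace ideal, and in particular $\mathrm{in}_<(J_{w,h})$; by the transfer principle for compatible splittings along Gröbner degenerations (cf.\ the framework of Knutson, or the treatment in \cite{DSH}), this lifts to a Frobenius splitting of $\mathbb{K}[BwB/B]$ that compatibly splits $J_{w,h}$ itself. Because the lex order is independent of $h$, a single splitting works uniformly for all $h$.

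Part (3) requires more care. Geometric vertex decomposability, in the sense of Klein--Rajchgot, is defined recursively: one selects a variable $y$, writes $J$ in the form $C \cap (N + \langle y \rangle)$ so that this is a \emph{geometric} vertex decomposition, and then requires that $C$ and $N$ themselves be geometrically vertex decomposable. The plan is to peel off the leading variables of the Gröbner basis one at a time, in an order compatible with the lex order, and show inductively that both the contraction $C$ and the link $N$ again admit a Gröbner basis of the same combinatorial shape, so that the induction on $|\lambda_h|$ closes at the base case (the zero ideal in a polynomial ring). The main obstacle I anticipate lies here: one must check at each recursive step that the link is once again the defining ideal of a complete intersection whose generators have leading terms among the remaining indeterminates, rather than acquiring extraneous components or degenerating in shape. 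This will require a careful combinatorial analysis of the explicit generators produced by the main theorem, most likely modeled on the analogous step in \cite{DSH}.
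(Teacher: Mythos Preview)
Your approach to parts (1), (2), and (4) is essentially the paper's approach. For (1) the paper likewise passes through the initial ideal (citing the flatness result \cite[Corollary~19.3.8]{EGA} rather than the ``codimension equals number of generators'' observation, but these are equivalent here); for (2) the paper uses exactly the same Hilbert-series-under-degeneration argument together with the complete intersection formula; and for (4) the paper makes explicit the lifted splitting you allude to, namely $\mathrm{Tr}\bigl((F^w)^{p-1}\bullet\bigr)$ where $F^w$ is built so that its initial term is the product of all variables and the product of the $g^w_{k,\ell}$ divides it, then cites \cite[Theorem~5.8 and Corollary~5.9]{DSH} just as you do.

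The one place your proposal diverges is part (3), and here you are making your life much harder than necessary. The paper's proof of (3) is a single line: it invokes \cite[Lemma~3.6]{DSH} (restated in the paper as Lemma~\ref{lem:irr init implies gvd}), which says that \emph{any} ideal whose initial ideal with respect to a lexicographic order is generated by indeterminates is automatically $<$-compatibly geometrically vertex decomposable. Since Theorem~\ref{thm:subsetGB} gives exactly this hypothesis, (3) follows immediately. Your anticipated ``main obstacle'' --- verifying at each recursive step that the link retains the required shape --- is already absorbed into the cited lemma and requires no combinatorial analysis specific to Hessenberg generators. So your plan would work, but it reinvents the general lemma rather than applying it.
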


We briefly describe the contents of the manuscript. In Section~\ref{sec:background} we recall the key definitions and background. In Section~\ref{sec:GB-HessSchubertCells} we introduce coordinates on the intersections of Schubert cells with regular nilpotent Hessenberg varieties, and define their local defining ideals via explicit relations. Then in Section~\ref{sec: Grobner for Hess Schubert cells} we prove our main results, and in Section~\ref{sec:applications} we give a detailed discussion of the many immediate applications of our main results, as itemized above.

\subsection*{Remark on the field.} 
For simplicity, as well as consistency with some of the literature that we cite, we work over $\mathbb C$. We note, however, that all of our computational-algebraic arguments work more generally. For example, our Gr\"obner basis arguments work over arbitrary base fields and our Frobenius splitting results hold over perfect fields of positive characteristic.

\subsection*{Acknowledgements} 
We thank the anonymous referee for a careful reading of this manuscript and their helpful comments.
Cummings was supported in part by an NSERC CGS-M and The Milos Novotny Fellowship.
Da Silva was supported in part by an NSERC Postdoctoral Fellowship of Canada.
Harada was supported in part by NSERC Discovery Grant 2019-06567 and a Canada Research Chair Tier 2 award.
Rajchgot was supported in part by NSERC Discovery Grant 2017-05732.

\section{Background} \label{sec:background}

In this section we recall some background on flag varieties, regular nilpotent Hessenberg varieties, Hessenberg patch ideals, and a certain circle action on regular nilpotent Hessenberg varieties.
We follow \cite{DSH} as our main reference for notational conventions.
For the purposes of this manuscript we restrict to Lie groups of type $A$, although Hessenberg varieties can be defined in arbitrary Lie type.

\subsection{Flag varieties} \label{subsec:Flags}

The {\bf (full) flag variety}
${\rm Flags}(\mathbb C^n)$ is the set of sequences of nested subspaces of $\mathbb C^n$ \[ {\rm Flags}(\mathbb C^n) \coloneqq \{ V_\bullet = (0 = V_0 \subsetneq V_1 \subsetneq V_2 \subsetneq \cdots \subsetneq V_n = \mathbb C^n) \mid \dim_{\mathbb C}(V_i) = i \}. \]
Let $B$ denote the Borel subgroup of $GL_n(\mathbb C)$ of upper-triangular invertible matrices and let $U^-$ denote the subgroup of $GL_n(\mathbb C)$ of lower-triangular matrices with $1$'s along the diagonal.
By representing $V_\bullet \in {\rm Flags}(\mathbb C^n)$ as an element of $GL_n(\mathbb C)$ by the matrix whose first $i$ columns span $V_i$, we can identify ${\rm Flags}(\mathbb C^n)$ with $GL_n(\mathbb C)/B$.
Then $U^-B$, the set of cosets $uB$ for $u \in U^-$, can be viewed as a coordinate chart on $GL_n(\mathbb C)/B \cong {\rm Flags}(\mathbb C^n)$, which we show precisely below.
In fact, $U^-B$ is an open dense subset of ${\rm Flags}(\mathbb C^n)$.

Let $w \in S_n$ be a permutation of the symmetric group on $n$ letters.
By slight abuse of notation, we will often take $w$ to also mean its corresponding permutation matrix.
Here we take the convention that we record the permutation ``along columns'' to obtain the corresponding permutation matrix; that is, we write 
\begin{equation} \label{eqn:permutationConvention}
    w = \begin{bmatrix} | & | & & | \\ e_{w(1)} & e_{w(2)} & \cdots & e_{w(n)} \\ | & | & & | \end{bmatrix}, 
\end{equation}
where $e_j$ is the $j^{\rm th}$ standard basis vector, written as a column vector, with a 1 in the $j^{\rm th}$ entry and 0's elsewhere.

Given the above notation, we define an {\bf open cell} (a coordinate chart) in $GL_n(\mathbb C)/B$ containing the permutation $w \in S_n$ by the translation
\begin{equation*}
    \mathcal N_w \coloneqq wU^-B \subseteq GL_n(\mathbb C)/B.
\end{equation*}
To describe $\mathcal{N}_w$ more concretely, let $M$ denote a generic element of $U^-$, 
\begin{equation} \label{eqn:M}
    M \coloneqq \begin{bmatrix}
    1 \\
    \star & 1 \\
    \vdots & \vdots & \ddots \\
    \star & \star & \cdots & 1 \\
    \star & \star & \cdots & \star & 1
    \end{bmatrix},
\end{equation}
where the $\star$'s are complex numbers. Since the entries below the diagonal are free of constraints, it is clear that $U^-$, the set of all such $M$'s, is isomorphic to affine space $\mathbb{A}^{n(n-1)/2}$. 
For any $w \in S_n$, the map $M \mapsto wMB \in GL_n(\mathbb C)/B$ is an embedding $U^- \cong \mathbb A^{n(n-1)/2} \xrightarrow{\cong} \mathcal N_w$, showing that $\mathcal{N}_w$ is isomorphic to affine space as well. Henceforth, we can identify a point in $\mathcal N_w$ with a translation by $w$ of a matrix of the form~\eqref{eqn:M}. 
In particular, a point in $\mathcal N_w$ is uniquely identified by a matrix $wM$ whose $(i,j)$-th entries are given by 
\begin{equation} \label{eqn:wM}
    [wM]_{i,j} = \begin{cases}
    1 & \text{if } i = w(j), \\
    0 & \text{if } j > w^{-1}(i), \\
    x_{i,j} & \text{otherwise, for some } x_{i,j} \in \mathbb{C}.
    \end{cases}
\end{equation}

\begin{example} \label{eg:w_0M}
    Denote by $w_0 \in S_n$ the longest permutation in Bruhat order.
    If $n = 4$, then $w_0 = 4321$ in one-line notation and 
    \begin{equation*}
        w_0M = \begin{bmatrix} x_{1,1} & x_{1,2} & x_{1,3} & 1 \\ x_{2,1} & x_{2,2} & 1 & 0 \\ x_{3,1} & 1 & 0 & 0 \\ 1 & 0 & 0 & 0 \end{bmatrix}.
    \end{equation*}
    \exampleqed
\end{example}

It follows from \eqref{eqn:wM} that there is an isomorphism between the coordinate ring of $\mathcal{N}_w$ and the polynomial ring in $n(n-1)/2$ indeterminates, the $x_{i,j}$ that satisfy $j<w^{-1}(i)$. We denote this polynomial ring by $\mathbb{C}[\mathbf{x}_w]$ where $\mathbf{x}_w$ denotes the finite set of indeterminates labelled $x_{i,j}$ for $j<w^{-1}(i)$.

\subsection{Hessenberg varieties and Hessenberg patch ideals} \label{subsec:Hess}

We next review the definition of a Hessenberg variety.
Let $[n] \coloneqq \{1,2, \ldots, n\}$.
A function $h:[n] \to [n]$ is a {\bf Hessenberg function} if $h(i) \ge i$ for all $i \in [n]$ and $h(i+1) \ge h(i)$ for all $i \in [n-1]$.
An {\bf indecomposable} Hessenberg function additionally satisfies $h(i) \ge i+1$ for all $i \in [n-1]$.
To a Hessenberg function $h$, we may associate the \textbf{Hessenberg subspace} of $\mathfrak{gl}_n(\mathbb{C})$, defined by 
\begin{equation*}
H(h) := \{ (a_{i,j})_{i,j \in [n]} \, \mid \, a_{i,j}=0 \, \textup{if} \, i>h(j)\},
\end{equation*}
where $\mathfrak{gl}_n(\mathbb{C})$ is the Lie algebra of $GL_n(\mathbb{C})$ consisting of $n\times n$ matrices with entries in $\mathbb{C}$. 
Given an indecomposable Hessenberg function $h$ and a linear operator $A:\mathbb C^n \to \mathbb C^n$, we define the {\bf Hessenberg variety associated to $A$ and $h$} to be the following subvariety of ${\rm Flags}(\mathbb C^n)$, 
\begin{equation*}
    {\rm Hess}(A,h) \coloneqq \{ (V_0 \subsetneq \cdots \subsetneq V_n) \in {\rm Flags}(\mathbb C^n) \mid AV_i \subseteq V_{h(i)} \text{ for all } i \in [n] \} . 
\end{equation*}
Equivalently, viewing a flag as a coset $MB$ in $GL_n(\mathbb{C})/B$ and representing it by a matrix $M$ in the coset $MB$, and viewing $A$ as an element in $\mathfrak{gl}_n(\mathbb{C})$,
we may also describe $\mathrm{Hess}(A,h)$ as 
\begin{equation} \label{eq: matrix eq for Hess}
\mathrm{Hess}(A,h) \coloneqq \{ MB \in GL_n(\mathbb{C})/B \, \mid \, M^{-1}AM \in H(h) \}.
\end{equation}
For more on this equivalence for all regular Hessenberg varieties, see \cite[Section 4]{ITW}.

In this manuscript we focus on the case that $A$ is a regular nilpotent operator.
Specifically, we restrict attention to the Hessenberg varieties ${\rm Hess}(A, h)$ where $A=\mathsf N$ is defined by
\begin{equation} \label{eqn:N}
    \mathsf N \coloneqq 
    \begin{bmatrix} 
    0 & 1 & 0 & \cdots & 0 \\
    0 & 0 & 1 & \cdots & 0 \\
    \vdots &&& \ddots & \vdots \\
    0 & 0 & 0 & \cdots & 1 \\
    0 & 0 & 0 & \cdots & 0
    \end{bmatrix}.
\end{equation}
The remainder of this section will be devoted to reviewing the defining equations for the affine variety $\mathcal N_w \cap {\rm Hess}(\mathsf N, h)$ from \cite[Section 2.2]{DSH}.
In the next section, we will describe equations which define the intersection of ${\rm Hess}(\mathsf N, h)$ with a \emph{Schubert cell}.

In the definition below, as well as in the discussion that follows, we view $M$ as a matrix with entries $0,1,$ or an indeterminate in the set $\mathbf{x}_w = \{x_{i,j}\}$ as in Section~\ref{subsec:Flags}, and view each matrix entry of $(wM)^{-1}\mathsf N(wM)$ as an element in $\mathbb{C}[\mathbf{x}_w]$. 
Observe that since $\det(wM) = \pm 1$, the entries of $(wM)^{-1}$ are polynomials in the entries of $wM$, and in particular, are not rational functions.

\begin{definition}[{\cite[Definition 3.3]{ADGH}}] \label{def:HessPatchIdealAndf^w}
    Let $h$ be a Hessenberg function. Let $w \in S_n$ and $k, \ell \in [n]$.
    Following the notation of \eqref{eqn:wM} and \eqref{eqn:N}, define polynomials in $\mathbb C[\mathbf x_w]$ by
    \begin{equation*}
        f^w_{k,\ell} \coloneqq [(wM)^{-1}\mathsf N (wM)]_{k,\ell}.
    \end{equation*}
    For an indecomposable Hessenberg function $h$, we define the {\bf Hessenberg patch ideal corresponding to $w$ and $h$} to be the ideal 
    \begin{equation*}
        I_{w,h} \coloneqq \langle f^w_{k,\ell} \mid k > h(\ell) \rangle \subseteq \mathbb C[\mathbf x_w].
    \end{equation*}
\end{definition}

From~\eqref{eq: matrix eq for Hess} it follows that the $f^w_{k,\ell}$ with $k > h(\ell)$ must vanish on $\mathrm{Hess}(\mathsf{N},h)$ and they (set-theoretically) cut out $\mathrm{Hess}(\mathsf{N},h)$ in $\mathcal{N}_w$. In fact, the following is known. 

\begin{proposition}[{\cite[Proposition 3.7]{ADGH}}]
The Hessenberg patch ideals $I_{w,h}$ defined above are radical and are the defining ideals of the affine variety $\mathcal N_w \cap {\rm Hess}(\mathsf N, h)$. 
\end{proposition}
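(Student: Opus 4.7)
The statement has two parts---the set-theoretic equality and the radicality---and my plan addresses them in turn.

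The set-theoretic equality $V(I_{w,h}) = \mathcal{N}_w \cap \mathrm{Hess}(\mathsf{N},h)$ is immediate from the description~\eqref{eq: matrix eq for Hess}: a coset $wMB \in \mathcal{N}_w$ lies in $\mathrm{Hess}(\mathsf{N},h)$ exactly when $(wM)^{-1}\mathsf{N}(wM) \in H(h)$, that is, when its $(k,\ell)$-entry vanishes for all $k > h(\ell)$. By Definition~\ref{def:HessPatchIdealAndf^w}, those entries are precisely the generators $f^w_{k,\ell}$ of $I_{w,h}$, so the two subsets of $\mathcal{N}_w$ coincide as closed sets.

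To upgrade this to the scheme-theoretic statement, my plan is to exhibit $I_{w,h}$ as a complete intersection whose generators form a Gr\"obner basis with squarefree initial ideal. A counting check gives the right expectation: the number of generators is
\[
\#\{(k,\ell) : k > h(\ell)\} \;=\; \sum_{\ell=1}^{n}(n-h(\ell)) \;=\; n^2 - \sum_{\ell=1}^n h(\ell),
\]
which coincides with $\binom{n}{2} - \dim \mathrm{Hess}(\mathsf{N},h) = \mathrm{codim}_{\flag(\mathbb{C}^n)}\mathrm{Hess}(\mathsf{N},h)$, since $\dim\mathrm{Hess}(\mathsf{N},h) = \sum_{i}(h(i)-i)$. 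I would then construct a lexicographic order $<$ on $\mathbb{C}[\mathbf{x}_w]$ for which each $f^w_{k,\ell}$ has leading term equal to a distinct indeterminate $x_{i(k,\ell),j(k,\ell)}$ with unit scalar coefficient. Since the leading monomials would then be pairwise coprime, Buchberger's criterion would immediately yield that $\{f^w_{k,\ell}\}$ is a Gr\"obner basis for $I_{w,h}$, and $\mathrm{in}_<(I_{w,h})$ would be a monomial ideal generated by distinct variables, hence radical; radicality of $I_{w,h}$ then follows from the standard fact that if $\mathrm{in}_<(I)$ is radical then so is $I$. This would also confirm that $I_{w,h}$ is a complete intersection, matching the codimension count above.

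The crucial obstacle is isolating the ``pivot'' indeterminate $x_{i(k,\ell),j(k,\ell)}$ for each $(k,\ell)$ with $k > h(\ell)$: it must appear with unit coefficient in $f^w_{k,\ell}$, be distinct across the generators, and not be canceled by the remaining terms under the chosen lex order. Expanding $(wM)^{-1}\mathsf{N}(wM)$ using~\eqref{eqn:wM} requires tracking how the permutation $w$ redistributes rows and columns, and then inverting $wM$---whose entries are polynomial in the $x_{i,j}$ by the $\det = \pm 1$ remark preceding Definition~\ref{def:HessPatchIdealAndf^w}, but can still be combinatorially intricate. For the chart $\mathcal{N}_{w_0}$ treated in \cite{DSH}, this is transparent because the nonzero pattern of $w_0 M$ is antidiagonal and the pivots read off cleanly; for general $w$, choosing both the indexing scheme $(k,\ell) \mapsto (i(k,\ell),j(k,\ell))$ and the compatible lex order is the essential combinatorial step, and is what I would expect to consume most of the effort.
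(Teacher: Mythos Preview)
The paper does not supply its own proof of this proposition; it is imported wholesale from \cite{ADGH} as background. So there is no in-paper argument to compare against, only the question of whether your plan would constitute a valid independent proof.

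Your set-theoretic paragraph is fine. The radicality plan, however, has a genuine gap at exactly the point you flag as ``the essential combinatorial step.'' You propose to find, for \emph{every} $w\in S_n$, a lex order on $\mathbb{C}[\mathbf{x}_w]$ under which each $f^w_{k,\ell}$ has a distinct indeterminate as leading term. You correctly note that this is transparent for $w=w_0$ (indeed, that is precisely Lemma~\ref{lem:f^w0-init} and Proposition~\ref{prop:w0Grobner}, taken from \cite{DSH}), but for general $w$ you offer no candidate for the pivot map $(k,\ell)\mapsto(i(k,\ell),j(k,\ell))$ and no candidate order. This is not a detail to be filled in later: the entire thrust of the present paper is that the Gr\"obner techniques of \cite{DSH} extend cleanly only after one \emph{restricts from the full patch $\mathcal{N}_w$ to the Schubert cell $X^w_\circ$}. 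The introduction says this explicitly (``as long as we restrict our attention to the Hessenberg Schubert cells (instead of looking at the entire local Hessenberg patch)''), and the whole machinery of Section~\ref{sec:GB-HessSchubertCells}---the permutation $v_w$, the map $\psi_w$, the set $D_w$---is built to reduce the Schubert-cell problem back to the $w_0$-chart. No such reduction is offered for the full patch ideals $I_{w,h}$, and the paper makes no claim that one exists.

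So as written, your proposal is a correct proof for $w=w_0$ and an unexecuted hope for general $w$. The original argument in \cite{ADGH} proceeds differently (via a direct analysis showing the generators allow an iterative elimination of variables, yielding a complete intersection and hence radicality), and does not rely on producing a single global lex order with indeterminate leading terms on the full patch.
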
 

\begin{example} \label{eg:patchIdeal}
    Consider $w_0 \in S_4$ as in Example \ref{eg:w_0M}.
    Then,
    \begin{equation*}
        (w_0M)^{-1}\mathsf N (w_0M) = \begin{bmatrix}  
            0 & 0 & 0 & 0 \\
            1 & 0 & 0 & 0 \\
            -x_{2,2}+x_{3,1} & 1 & 0 & 0 \\
            -x_{1,2} + x_{1,3}(x_{2,2} - x_{3,1}) + x_{2,1} & -x_{1,3} + x_{2,2} & 1 & 0
        \end{bmatrix}.
    \end{equation*}
    For example, for the indecomposable Hessenberg function $h = (2,3,4,4)$, the corresponding Hessenberg patch ideal corresponding to $w_0$ and $h$ is given by
    \begin{equation*}
        I_{w_0, h} = \langle -x_{2,2}+x_{3,1}, \, -x_{1,2} + x_{1,3}(x_{2,2} - x_{3,1}) + x_{2,1}, \, -x_{1,3} + x_{2,2} \rangle.
    \end{equation*}
    \exampleqed
\end{example}

In work with Abe, DeDieu, and Galetto, the third author showed that there exists an explicit inductive formula for the polynomials $f^{w_0}_{k,\ell}$ \cite[Equation (3.6)]{ADGH}.
Building on this work, the second and third authors found an explicit formula for the initial term for the polynomials $f^{w_0}_{k,\ell}$ and recursive equations relating the $f^{w_0}_{k,\ell}$ for varying $k$ and $\ell$ \cite{DSH}. To proceed further, we require a monomial order, which in \cite{DSH} is defined as follows. 

\begin{definition}[{\cite[Definition 4.11]{DSH}}] \label{def:w0MonomialOrder}
    We denote by $<_n$ the lexicographical monomial order on $\mathbb C[\mathbf x_{w_0}]$ defined by $x_{i,j} >_n x_{i',j'}$ if $i < i'$, or, $i = i'$ and $j < j'$.
\end{definition}

Moreover, there exists an ordering of the polynomials $f^{w_0}_{k,\ell}$ such that the initial term of one polynomial does not appear in any of the polynomials later in the order.
This sequence can be obtained by reading from the matrix $(w_0M)^{-1}\mathsf N (w_0M)$ left-to-right along the bottom row, then left-to-right along the penultimate row, and so on.
Explicitly, this is the sequence
\begin{equation} \label{seq:f^w0s}
    f^{w_0}_{n,1}, \, f^{w_0}_{n,2}, \, \ldots, \, f^{w_0}_{n,n-1}, \, f^{w_0}_{n-1,1}, \, f^{w_0}_{n-1,2}, \, \ldots.
\end{equation}
It is shown in \cite{DSH} that the initial terms of the polynomials $f^{w_0}_{k,\ell}$ with respect to $<_n$ form a list of distinct indeterminates.
We have the following. 

\begin{lemma}[{\cite[Lemma 4.13]{DSH}}] \label{lem:f^w0-init}
    Let $k, \ell \in [n]$ satisfy $k > \ell + 1$.
    Then the following hold. 
    \begin{enumerate}
        \item \label{lem-part:f^w0-init}
        With respect to the monomial order $<_n$ of Definition \ref{def:w0MonomialOrder}, $\init_{<_n}(f^{w_0}_{k,\ell}) = -x_{n+1-k, \ell + 1}$.
        In particular, the initial term is an indeterminate (up to sign). 
        
        \item \label{lem-part:f^w0-init-seq}
        The initial term $-x_{n+1-k,\ell+1}$ of $f^{w_0}_{k,\ell}$ does not appear in any $f^{w_0}_{k',\ell'}$ which are listed after $f^{w_0}_{k,\ell}$ in the sequence \eqref{seq:f^w0s}.

        \item \label{lem-part:f^w0-init-indet}
        The indeterminate $x_{n+1-k,\ell+1}$ appears exactly once in $f^{w_0}_{k,\ell}$ and all other indeterminates $x_{i,j}$ appearing in $f^{w_0}_{k,\ell}$ either satisfy $i > n+1-k$ or $i = n+1-k$ and $j > \ell + 1$.
    \end{enumerate}
\end{lemma}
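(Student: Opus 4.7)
The plan is to establish an explicit formula for $f^{w_0}_{k,\ell}$ and then analyze its initial term by an induction on $k - \ell$. Using that $w_0$ is an involution (so $(w_0M)^{-1} = M^{-1}w_0$) and that left-multiplication by $\mathsf{N}$ shifts rows up by one, a direct computation combined with the staircase of $1$'s and $0$'s in $w_0M$ yields the decomposition
\[
f^{w_0}_{k,\ell} = (M^{-1})_{k,\ell+1} + \sum_{i=\ell+2}^{k} (M^{-1})_{k,i}\,x_{n+2-i,\ell},
\]
where I identify $[M]_{p,q} = x_{n+1-p,q}$ for $q < p$ so that the labels agree with the $(i,j)$-indexing of $w_0M$ in \eqref{eqn:wM}.

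The heart of the proof is to show by induction on $k - \ell$ the following about $(M^{-1})_{k,\ell+1}$: (i) its initial term in $<_n$ is $-x_{n+1-k,\ell+1}$; (ii) the coefficient of $x_{n+1-k,\ell+1}$ is exactly $-1$; and (iii) every other variable $x_{i,j}$ appearing satisfies $i > n+1-k$, or $i = n+1-k$ and $j > \ell+1$. The recursion from the $(k,\ell+1)$-entry of $MM^{-1} = I$ reads
\[
(M^{-1})_{k,\ell+1} = -x_{n+1-k,\ell+1} - \sum_{p=\ell+2}^{k-1} x_{n+1-k,p}\,(M^{-1})_{p,\ell+1},
\]
so the inductive hypothesis applied to $(M^{-1})_{p,\ell+1}$ (whose leading term is $-x_{n+1-p,\ell+1}$ with row $n+1-p > n+1-k$) makes each summand's leading monomial of the form $x_{n+1-k,p}\,x_{n+1-p,\ell+1}$, whose largest variable $x_{n+1-k,p}$ is strictly $<_n$-smaller than $x_{n+1-k,\ell+1}$ because $p > \ell+1$. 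Combined with the decomposition above, in the outer sum each term $(M^{-1})_{k,i}\,x_{n+2-i,\ell}$ for $i \ge \ell+2$ also has largest variable either $x_{n+1-k,i}$ or $x_{n+2-i,\ell}$, and both are strictly $<_n$-smaller than $x_{n+1-k,\ell+1}$; this establishes parts (1) and (3) of the lemma.

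For part (2), note that for any $(k',\ell')$ later than $(k,\ell)$ in the sequence \eqref{seq:f^w0s} we have $k' < k$ or ($k' = k$ and $\ell' > \ell$). By the analogue of (iii) applied to $f^{w_0}_{k',\ell'}$, every variable appearing has row $\ge n+1-k'$ and, if equal, column $\ge \ell' + 1$. If $k' < k$, the row $n+1-k < n+1-k'$ forces $x_{n+1-k,\ell+1}$ to be absent; if $k' = k$ and $\ell' > \ell$, the column $\ell+1 < \ell'+1$ forces the same conclusion. The main obstacle is bookkeeping: keeping the two distinct index conventions for $M$ versus $w_0M$ straight and ensuring no leading-term cancellation occurs in the induction, both of which reduce to elementary monotonicity checks on row and column indices in $<_n$.
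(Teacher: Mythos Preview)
The paper does not actually prove this lemma; it is quoted verbatim from \cite[Lemma 4.13]{DSH} and used as a black box. So there is no in-paper proof to compare against, only the brief remark preceding the lemma that \cite{DSH} builds on an explicit inductive formula for $f^{w_0}_{k,\ell}$ from \cite{ADGH} and derives recursive equations relating the $f^{w_0}_{k,\ell}$ for varying $k$ and $\ell$.

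Your argument is correct. The decomposition
\[
f^{w_0}_{k,\ell} = (M^{-1})_{k,\ell+1} + \sum_{i=\ell+2}^{k} (M^{-1})_{k,i}\,x_{n+2-i,\ell}
\]
follows exactly as you say from $(w_0M)^{-1}=M^{-1}w_0$, the row-shift effect of $\mathsf N$, and the triangular shape of $M^{-1}$. The recursion you extract for $(M^{-1})_{k,\ell+1}$ from the $(k,\ell+1)$-entry of $MM^{-1}=I$ is correct, and the induction (really on the distance $a-b$ for any entry $(M^{-1})_{a,b}$ with $a>b$, not just those with $b=\ell+1$) gives the three properties you claim. One small point worth making explicit: for the outer sum you implicitly need the inductive statement for $(M^{-1})_{k,i}$ with $i\neq \ell+1$ as well, and you need the boundary case $i=k$, where $(M^{-1})_{k,k}=1$ and the term reduces to $x_{n+2-k,\ell}$, which indeed has row index $n+2-k>n+1-k$. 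With those cases handled, your verification that $x_{n+1-k,\ell+1}$ occurs with coefficient exactly $-1$ and nowhere else, and your row/column comparison for part~(2), go through.

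Compared with the approach alluded to in the paper (a recursion among the $f^{w_0}_{k,\ell}$ themselves, inherited from \cite{ADGH}), your route isolates the contribution of $(M^{-1})_{k,\ell+1}$ first and then controls the remaining sum; this is a slightly different bookkeeping but amounts to the same underlying triangular structure of $M^{-1}$.
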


Let $h$ be an indecomposable Hessenberg function.
By Definition \ref{def:HessPatchIdealAndf^w}, the ideal $I_{w_0,h}$ is generated by the polynomials $f^{w_0}_{k,\ell}$ for all $k, \ell \in [n]$ with $k > h(\ell)$. By definition of indecomposable Hessenberg functions, if $k>h(\ell)$ then $k \geq \ell+1$, so the above lemma applies. We additionally have the following useful result from \cite{DSH}. 

\begin{proposition}[{\cite[Corollary 4.15]{DSH}}] \label{prop:w0Grobner}
    For any indecomposable Hessenberg function $h$, the generators $f^{w_0}_{k,\ell}$ form a Gr\"obner basis for the Hessenberg patch ideal $I_{w_0,h}$ with respect to $<_n$.
\end{proposition}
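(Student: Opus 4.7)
The plan is to apply the classical corollary of Buchberger's criterion which asserts that if the leading monomials of a set of generators are pairwise coprime, then every $S$-polynomial between two generators reduces to zero, and hence the generators automatically form a Gröbner basis. The key observation enabling this strategy is that Lemma~\ref{lem:f^w0-init}(\ref{lem-part:f^w0-init}) has already identified $\init_{<_n}(f^{w_0}_{k,\ell})$ as (up to sign) the single indeterminate $x_{n+1-k,\ell+1}$.

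First I would confirm that the hypothesis $k > \ell+1$ of Lemma~\ref{lem:f^w0-init} is automatically met for our generators. Since $h$ is indecomposable, $h(\ell) \geq \ell+1$ for every $\ell \in [n-1]$, and for $\ell = n$ the condition $k > h(\ell) = n$ is vacuous. Hence every generator $f^{w_0}_{k,\ell}$ of $I_{w_0,h}$ satisfies $k \geq \ell+2 > \ell+1$, so the lemma applies.

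Next I would verify that distinct index pairs $(k,\ell)$ yield distinct leading indeterminates. This is immediate from the injectivity of $(k,\ell) \mapsto (n+1-k,\ell+1)$. In particular, any two of the leading monomials share no variables, so they are pairwise coprime as monomials. The coprime leading term criterion then directly delivers the conclusion that $\{f^{w_0}_{k,\ell} : k > h(\ell)\}$ is a Gröbner basis with respect to $<_n$.

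I do not anticipate a serious obstacle: the substantive work --- pinning down the initial terms explicitly as distinct indeterminates --- has been absorbed into Lemma~\ref{lem:f^w0-init}, and the remaining argument is a one-line application of a textbook criterion. As a backup, one could argue more concretely using parts (\ref{lem-part:f^w0-init-seq}) and (\ref{lem-part:f^w0-init-indet}) of that lemma to exhibit a triangular reduction: each leading indeterminate appears exactly once in its own polynomial and in none of the later polynomials in the sequence~\eqref{seq:f^w0s}, so any $S$-polynomial may be reduced to zero by successively cancelling leading monomials along the sequence. Either formulation completes the proof.
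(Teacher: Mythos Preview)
Your proposal is correct. Note, however, that this proposition is not actually proved in the present paper --- it is cited as \cite[Corollary 4.15]{DSH} and stated here without proof. That said, your argument is essentially identical to the one the paper gives later for the generalization in Theorem~\ref{thm:subsetGB}: there, too, one observes that the initial terms are distinct indeterminates (hence pairwise coprime) and invokes Buchberger's Criterion \cite[Chapter 2.6, Theorem 6]{CLO} to conclude. So your approach is exactly in the spirit of the paper's methods.
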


In this paper, we generalize Proposition \ref{prop:w0Grobner} to all regular nilpotent Hessenberg Schubert cells.

\begin{remark}
    Throughout this manuscript, we restrict to indecomposable Hessenberg functions.
    This restriction is quite natural since, in type $A$, any regular nilpotent Hessenberg variety is the product of indecomposable regular nilpotent Hessenberg varieties \cite[Theorem 4.5]{Drellich}.
\end{remark}

\subsection{$\mathsf S$-actions on regular nilpotent Hessenberg varieties} \label{subsec:S-action}

In this manuscript we will use a certain circle action on $\mathrm{Hess}(\mathsf{N},h)$ which allows us to restrict attention to an appropriate subset of Schubert cells for our analysis. It is the same circle action as is used in \cite{DSH}, so we keep the exposition brief and refer the reader to \cite[Section 2]{DSH} for details. 

Let $\mathsf{S} \cong \mathbb{C}^*$ denote the circle subgroup of the maximal torus of $GL_n(\mathbb C)$ given by 
\[
\mathsf{S} \coloneqq \{ \underline{t}\coloneqq \mathrm{diag}(t, t^2, \ldots, t^n) \, \mid \, t \in \mathbb{C}^*\}. 
\]
It is straightforward to check that $\mathsf{S}$ preserves $\mathrm{Hess}(\mathsf{N},h)$.
Moreover, since $\mathsf{S}$ is a subgroup of the usual maximal torus of $GL_n(\mathbb{C})$ and the maximal torus preserves the coordinate patches $\mathcal{N}_w$, it follows that $\mathsf{S}$ also preserves the cell $\mathcal{N}_w$, hence also the intersections $\mathrm{Hess}(\mathsf{N},h) \cap \mathcal{N}_w$.  

Additionally, we can also concretely compute the action of $\mathsf{S}$ on any local coordinate patch $\mathcal{N}_{w}$.  Recall that the standard maximal torus action on $GL_n(\mathbb C)/B$ is given by left multiplication on left cosets; since $\mathsf{S}$ is a subgroup, it acts in the same way. More precisely, given a matrix $w M$ representing a flag $[w M] \in GL_n(\mathbb C)/B$, we have  
\[
\underline{t} \cdot [w M] = [\underline t(w M)]
\]
for any $\underline t \in \mathsf S$.
To express $[\underline t(wM)]$ in terms of the coordinate chart $\mathcal{N}_w$, 
we must now find a matrix $M'$ with entries described by~\eqref{eqn:wM} such that $\underline tw M = w M'$. The simple observation here is that left multiplication by the diagonal matrix $\underline{t}$ changes the entries in the $(w(j),j)$-th spots to be $t^{w(j)}$ instead of a $1$, so in order to return the matrix to the required form, it is necessary to multiply on the right by the diagonal element $\mathrm{diag}(t^{-w(1)}, t^{-w(2)}, \ldots, t^{-w(n)})$, i.e. the diagonal matrix whose $j$-th diagonal entry is $t^{-w(j)}$. From this simple observation it is not hard to see that the matrix $w M'$ satisfies 
\begin{equation*}
[wM']_{i,j} = 
\begin{cases} 
1  & \text{if } i=w(j), \\
0 & \text{if } j>w^{-1}(i),\\ 
t^{i-w(j)} x_{i,j} & \text{otherwise}
\end{cases}
\end{equation*}
where the $x_{i,j}$ are the entries of $wM$ as in~\eqref{eqn:wM}. 

The above-described torus action on the patch $\mathcal{N}_w$ induces an $\mathsf S$-grading on the coordinate ring $\mathbb{C}[\mathbf{x}_w]$, where 
the indeterminate $x_{i,j}$ has degree $w(j)-i$.
In Section~\ref{sec:GB-HessSchubertCells}, we will restrict this action further to the Schubert cell at $w$, but the principle remains the same.
In fact, in this setting, we will see that this $\mathbb Z$-grading is positive.

\section{Coordinates on regular nilpotent Hessenberg Schubert cells} \label{sec:GB-HessSchubertCells}

In this section, we will relate coordinates on the regular nilpotent Hessenberg Schubert cells to certain coordinates on the $w_0$-chart of the same Hessenberg variety. This will allow us, in the next section, to adapt results about the $w_0$-chart from \cite{DSH} to yield results for the regular nilpotent Hessenberg Schubert cells, allowing us to draw conclusions about, e.g., the initial terms, recurrence relations, and Gr\"obner bases. Indeed, finding this explicit relation with the $w_0$-chart allows us to avoid reproving the technical details of \cite{DSH}. 

The Schubert cell of a permutation $w$ is defined by \[X^w_\circ\coloneqq BwB/B \subseteq GL_n(\mathbb{C})/B \cong \mathrm{Flags}(\mathbb{C}^n).\]
By the well-known theory of Bruhat decomposition, the Schubert cells are disjoint and their union is  $GL_n(\mathbb{C})/B$ (see \cite[\textsection 23.4]{FH} for further background on the Bruhat decomposition). Furthermore, each $X^w_\circ$ is isomorphic to a complex affine space of dimension $\ell(w)$, where $\ell(w)$ denotes the length of the permutation $w$. In fact, in a manner similar to how we described elements in $GL_n(\mathbb{C})/B$ in the previous section, we can parametrize the Schubert cell by viewing an element of $X^w_\circ$ as represented by a matrix of the form $\Omega_w$, where the $(i,j)$-th entry is given by:
\begin{equation} \label{eqn:Omega_ij}
    [\Omega_w]_{i,j} = 
    \begin{cases} 
        1 & \text{if } i = w(j), \\ 
        0 & \text{if } i > w(j) \text{ or } j > w^{-1}(i), \\
        z_{i,j} & \text{otherwise (i.e., } i < w(j) \text{ and } j < w^{-1}(i)). 
    \end{cases} 
\end{equation}
Note that the condition $i > w(j)$ corresponds to the matrix entries occurring below the $1$'s and the condition $j > w^{-1}(i)$ corresponds to the entries occurring to the right of the $1$'s. This parametrization illustrates that $X^w_\circ$ is isomorphic to an affine space, and we can view $\mathbf{z}_w := \{z_{i,j} \, \mid \, i<w(j) \, \textup{ and } \, j < w^{-1}(i)\}$ as indeterminates corresponding the affine coordinates on $X^w_\circ$. In particular, the affine coordinate ring of $X^w_\circ$ can be identified with the polynomial ring $\mathbb C[\mathbf z_w]$. 
Note that, in the discussion below, we will reserve the coordinates $x_{i,j}$ for entries of $wM$ as in \eqref{eqn:wM}, and the coordinates $z_{i,j}$ for entries of $\Omega_w$ as in~\eqref{eqn:Omega_ij}.

We now wish to relate the coordinates $\mathbf{z}_w$ with the coordinates $\mathbf{x}_{w_0}$. To do this, 
our first observation is that we can write the $z_{i,j}$ coordinates of the Schubert cell as a specialization of the $x_{i,j}$ coordinates of the $w_0$-chart (by setting certain variables to zero after an appropriate relabelling).
We start with a motivational example to demonstrate this behaviour, and describe this process more precisely afterwards.

\begin{example} \label{eg:motivation}
Let $n=4$ and $w=3421$. As we see below, for an element $w_0M\in\mathcal{N}_{w_0}$, there exists a permutation matrix $v_w$ such that if we right-multiply $w_0M$ by $v_w$ and then set some variables equal to $0$, then we obtain an element of the cell $X^w_\circ$. Indeed, in our example, $v_w$ turns out to be the permutation matrix 
$$
v_w = 
\begin{bmatrix}
    0 & 1 & 0 & 0 \\
    1 & 0 & 0 & 0 \\
    0 & 0 & 1 & 0 \\ 
    0 & 0 & 0 & 1 
\end{bmatrix}
$$
which corresponds to the permutation $v_w = 2134$. 
We can compute the result of right-multiplication by $v_w$ and setting $x_{3,1}$ to $0$, and obtain 
\begin{equation} \label{eqn:w0Mv_w}
    \left.\left(
    \begin{bmatrix}
        x_{1,1}  &  x_{1,2} & x_{1,3} & 1 \\
        x_{2,1}  &  x_{2,2} & 1 & 0 \\  
        x_{3,1}  &  1 & 0 & 0 \\ 
        1 & 0 & 0 & 0
    \end{bmatrix}
    \begin{bmatrix}
        0  & 1 & 0 & 0 \\
        1  &  0 & 0 & 0 \\  
        0  &  0 & 1 & 0 \\ 
        0 & 0 & 0 & 1   
    \end{bmatrix} 
    \right)\right|_{x_{3,1}=0}
    = 
    \begin{bmatrix}
        x_{1,2}  &  x_{1,1} & x_{1,3} & 1 \\
        x_{2,2}  &  x_{2,1} & 1 & 0 \\  
        1  &  0 & 0 & 0 \\ 
        0 & 1 & 0 & 0
    \end{bmatrix} \in X^w_\circ.
\end{equation}
For an element of the cell $X^w_\circ$, we may represent it as a matrix with entries as in \eqref{eqn:Omega_ij} by 
\begin{equation} \label{eqn:Omegaw in example}
\Omega_w=
 \begin{bmatrix}
    z_{1,1}  &  z_{1,2} & z_{1,3} & 1 \\
    z_{2,1}  &  z_{2,2} & 1 & 0 \\  
    1  &  0 & 0 & 0 \\ 
    0 & 1 & 0 & 0
\end{bmatrix}.
\end{equation}
Using the correspondence~\eqref{eqn:w0Mv_w}, we can identify the coordinates of $\Omega_w$ with those of $w_0M$ by a ring homomorphism $\psi_w$ from $\mathbb{C}[\mathbf x_{w_0}] \rightarrow \mathbb{C}[\mathbf z_w]$ defined by comparing the matrix of \eqref{eqn:w0Mv_w} entry-by-entry to $\Omega_w$ in~\eqref{eqn:Omegaw in example}. For instance, in this example, $z_{1,1}$ is identified with $x_{1,2}$, $z_{1,2}$ with $x_{1,1}$, and so on. 
The reader may verify that this is exactly the correspondence given in Definition \ref{def:psi} below, which formalizes and generalizes the phenomenon seen in this example. 
\exampleqed
\end{example}

Following the example, given a permutation $w \in S_n$, define $v_w \coloneqq w_0w$. Denote the $(i,j)$-th entry of the matrix $w_0M$ by $x_{i,j}$ for $i,j \in [n]$ with $i+j\leq n+1$. (Here we set $x_{i,n-i+1} \coloneqq 1$ for all $1 \leq i \leq n$, i.e., the anti-diagonal entries are set to be equal to $1$.) Under right-multiplication by $v_w$ (and by our convention~\eqref{eqn:permutationConvention} for permutation matrices), the columns of $w_0 M$ are permuted, and in particular, the $(i,j)$-th entry of the matrix $(w_0M)v_w$ is the $(i, v_w(j))$-th entry of the matrix $w_0M$.
Equivalently, we have
\begin{equation*} 
    [(w_0M)v_w]_{i, j} = [w_0M]_{i, v_w(j)}.
\end{equation*}
Moreover, since it is precisely the anti-diagonal entries of $w_0M$ which are equal to $1$, the above equality implies that the $(i,j)$-th entry of $(w_0M)v_w$ is equal to $1$ precisely when $i = n+1 - v_w(j) = w_0(v_w(j)) = w(j)$, where the last equality is by our choice of $v_w$. 
This means that the $1$'s in the matrix $(w_0M)v_w$ are located precisely at the same locations as the $1$'s in the permutation matrix corresponding to $w$. Therefore, if 
the variables appearing in $(w_0M)v_w$  below the 1's and to the right of the $1$'s are all equal to $0$, then the matrix represents an element of the cell $X^w_\circ$.

Indeed, from~\eqref{eqn:Omega_ij}, it then follows that in order for the matrix $(w_0M)v_w$ to be in $X^w_\circ$, we must have that $[(w_0M)v_w]_{i,j}=0$ if either 
\begin{equation*}
w(j) < i
\end{equation*} 
or 
\begin{equation*}
j>w^{-1}(i). 
\end{equation*} 
We can then ask which indeterminates appearing as entries in $w_0M$, obtained by right-multiplication by $v_w$, have to be equal to $0$ in order for the corresponding matrix $(w_0M)v_w$ to be contained in $X^w_\circ$. Indeed, let $x_{a,b}$ denote the $(a,b)$-th entry of $w_0M$ for $a+b < n+1$. (As we already saw, if $a+b=n+1$ then $x_{a,b}=1$ so we do not consider this case.) Based on the above paragraphs, it is straightforward now to see that if $(w_0M) v_w$ is in $X^w_\circ$ it is necessary to have $x_{a,b}=0$ if either 
\begin{equation} \label{ineq:below1}
    w(v_w^{-1}(b)) < a
\end{equation}
or 
\begin{equation} \label{ineq:rightOf1}
    v_w^{-1}(b) > w^{-1}(a).
\end{equation}
Since $v_w = w_0 w$ by definition, the inequality \eqref{ineq:below1} simplifies to $w_0(b) < a$ or equivalently $n + 1 - b < a$, which can never happen since we started with $a + b < n+1$. Therefore, the only non-vacuous condition is~\eqref{ineq:rightOf1}. 

The preceding discussion motivates the following definitions. (In the previous paragraph, we used indices $a$ and $b$ to distinguish the indices used for $w_0M$ versus $(w_0M)v_w$. We now revert to using $i,j$ to denote indices for the matrix $w_0M$.) Let 
$D_w$ denote the collection of indeterminates $x_{i,j}$ with $(i,j)$ satisfying~\eqref{ineq:rightOf1}; more precisely, let
\begin{equation} \label{eqn: def Dw} 
D_w \coloneqq \{ x_{i, j} \in \mathbf x_{w_0} \mid i+j \leq n \text{ and } v_w^{-1}(j) > w^{-1}(i) \}. 
\end{equation} 

\begin{definition} \label{def:psi}
    Define a homomorphism of rings $\psi_w: \mathbb C[\mathbf x_{w_0}] \to \mathbb C[\mathbf z_w]$ by \[ \psi_w(x_{i,j}) \coloneqq 
    \begin{cases} 
        0 & \text{if } x_{i,j} \in D_w, \\
        z_{i,v_w^{-1}(j)} & \text{if } x_{i, j} \notin D_w. \\
    \end{cases} \]
\end{definition}

\begin{remark} 
Geometrically, the ring homomorphism $\psi_w$ corresponds to the embedding of the Schubert cell $X^w_\circ$ into $\mathcal{N}_{w_0}$, given by right multiplication of an element $\Omega_w \in X^w_\circ$ by $v_w^{-1}$. Then $\Omega_w v_w^{-1} \in \mathcal{N}_{w_0}$, and certain coordinates are equal to $0$ by the discussion above, so the corresponding map on coordinate rings sets those coordinates to $0$.
\end{remark} 

In what follows, if $A$ is a matrix whose entries are in $\mathbb C[\mathbf x_{w_0}]$, we denote by $\psi_w(A)$ the matrix obtained by applying the ring homomorphism $\psi_w$ to each matrix entry of $A$, i.e., if $a_{i,j}$ is the $(i,j)$-th entry of $A$, then $\psi_w(a_{i,j})$ is the $(i,j)$-th entry of $\psi_w(A)$. 
By construction, this results in a matrix with entries in $\mathbb{C}[\mathbf{z}_w]$.  With this notation in place, we have the following observations.

\begin{remark} \label{rmk: omega_w}
    For any $w \in S_n$, we have that 
    $\Omega_w = \psi_w((w_0M)v_w)$ by construction of $\psi_w$.
\end{remark}

We also have the following. 

\begin{lemma} \label{lem:psiCommutesInverse}
    For any $w \in S_n$, $\left( \psi_w((w_0M)v_w) \right)^{-1} = \psi_w \left( ((w_0M)v_w)^{-1} \right)$.
\end{lemma}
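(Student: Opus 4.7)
The plan is to exploit the fact that $\psi_w$ is a \emph{ring homomorphism} between polynomial rings and therefore commutes with any polynomial operation performed entrywise on matrices. The only subtlety is to ensure that the inverse matrix $((w_0M)v_w)^{-1}$ has entries that actually live in $\mathbb{C}[\mathbf{x}_{w_0}]$ (as opposed to its field of fractions), so that it makes sense to apply $\psi_w$ to it in the first place.

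First I would record that $\det((w_0M)v_w) = \det(w_0)\det(M)\det(v_w) = \pm 1$, since $w_0$ and $v_w$ are permutation matrices and $M \in U^-$ has $1$'s on the diagonal. By Cramer's rule, each entry of $((w_0M)v_w)^{-1}$ is then a polynomial in the $x_{i,j}$ (up to sign), so $((w_0M)v_w)^{-1}$ is a well-defined matrix with entries in $\mathbb{C}[\mathbf{x}_{w_0}]$ and $\psi_w$ can be applied to it entrywise. A symmetric observation: the matrix $\Omega_w = \psi_w((w_0M)v_w)$ (using Remark~\ref{rmk: omega_w}) also has determinant $\pm 1$, so $\Omega_w^{-1}$ exists as a matrix with entries in $\mathbb{C}[\mathbf z_w]$.

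Next I would apply $\psi_w$ entrywise to both sides of the identity
\[
(w_0M)v_w \cdot ((w_0M)v_w)^{-1} = I_n.
\]
Since matrix multiplication is computed by polynomial operations (sums of products) on entries, and since $\psi_w$ is a ring homomorphism, $\psi_w$ commutes with matrix multiplication: for any two matrices $A, B$ with entries in $\mathbb{C}[\mathbf{x}_{w_0}]$, one has $\psi_w(AB) = \psi_w(A)\psi_w(B)$. Applying this (and noting $\psi_w(I_n) = I_n$) gives
\[
\psi_w((w_0M)v_w) \cdot \psi_w\!\left(((w_0M)v_w)^{-1}\right) = I_n.
\]
By Remark~\ref{rmk: omega_w}, the first factor on the left is $\Omega_w = \psi_w((w_0M)v_w)$, and since $\Omega_w$ is invertible (as noted above), we can left-multiply by $\Omega_w^{-1} = (\psi_w((w_0M)v_w))^{-1}$ to conclude the claimed equality.

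The only potential obstacle is the polynomial-inverse issue flagged above; once that is dispatched via the $\det = \pm 1$ observation, the proof is a short formal manipulation with ring homomorphisms, so I do not anticipate further difficulty.
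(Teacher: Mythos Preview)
Your proposal is correct and follows essentially the same approach as the paper: both arguments hinge on the observation that $\det((w_0M)v_w)=\pm 1$ so that the inverse has polynomial entries, and then use that the ring homomorphism $\psi_w$ applied entrywise commutes with the polynomial operations computing matrix products and inverses. Your write-up is slightly more explicit (spelling out the identity $AB=I_n$ and left-multiplying by $\Omega_w^{-1}$), but the content is the same.
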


\begin{proof}
    This follows from the facts that $\psi_w$ is a ring homomorphism applied entry-by-entry and that any entry of the inverse of a matrix can be expressed as a polynomial in terms of the entries of the given matrix. 
    Indeed, the determinant of $M$ is equal to $1$ and determinants of permutation matrices are $\pm 1$. 
    Hence the determinant of $(w_0M)v_w$ is $\pm 1$ and so the determinant in the denominator of the usual formula for a matrix inverse does not appear in this computation, and all entries of the matrix inverse are polynomials and not rational functions.
\end{proof}

\begin{definition} \label{def:HessSchubertCell}
    Let $w \in S_n$.
    As in Definition \ref{def:HessPatchIdealAndf^w}, define polynomials $g^w_{k,\ell}$ in $\mathbb C[\mathbf z_w]$ for $k, \ell \in [n]$ by 
    \begin{equation*} 
        g^w_{k,\ell} \coloneqq \left[ \Omega_w^{-1} \mathsf N \Omega_w \right]_{k,\ell}.
    \end{equation*}
    For an indecomposable Hessenberg function $h$, we define the {\bf regular nilpotent Hessenberg Schubert cell ideal corresponding to $w$ and $h$} to be the ideal 
    \begin{equation} \label{def:CellIdeal}
        J_{w,h} \coloneqq \langle g^w_{k,\ell} \mid k > h(\ell) \rangle \subseteq \mathbb C[\mathbf z_w].
    \end{equation}
\end{definition}

As in the case of the generators $f^w_{k,\ell}$ in Definition \ref{def:HessPatchIdealAndf^w}, it is straightforward to see from~\eqref{eq: matrix eq for Hess} that the $g^w_{k,\ell}$ set-theoretically cut out the intersection $\mathrm{Hess}(\mathsf{N},h) \cap X^w_\circ$ from $X^w_\circ$, justifying the terminology in Definition~\ref{def:HessSchubertCell} above. It will also follow from our algebraic arguments below that $J_{w,h}$ is radical (see Corollary~\ref{cor:radical cell ideal}), and hence $J_{w,h}$ also scheme-theoretically defines $\mathrm{Hess}(\mathsf{N},h) \cap X^w_\circ$. 

\begin{notation} \label{notation: lambda h}
It will also be useful in what follows to have notation for the number of generators $g^w_{k,\ell}$ of $J_{w,h}$ as listed in~\eqref{def:CellIdeal}. Let $\lambda_h$ denote the partition $(n-h(1), n-h(2),\ldots, n-h(n-1),n-h(n)=0)$. Then $\lvert \lambda_h \rvert$, the sum of the parts of $\lambda_h$, is $\lvert \lambda_h \rvert = n^2 - \sum_{i=1}^n h(i)$. This is equal to the number of generators $g^w_{k,\ell}$ as listed in~\eqref{def:CellIdeal}. 
\end{notation}

\begin{example} \label{eg:motivation-part2}
Continuing Example \ref{eg:motivation},
we now note that we can express the generators for the regular nilpotent Hessenberg Schubert cell ideal in terms of the Hessenberg $w_0$-patch ideal via the map $\psi_w$.
First, notice that 
\begin{equation} \label{eq: conj Omegaw} 
\Omega_w^{-1} \mathsf N \Omega_w = \begin{bmatrix}
    0 & 1 & 0 & 0 \\
    0 & 0 & 0 & 0 \\
    1 & -z_{2,1} & 0 & 0 \\
    -z_{1,3} + z_{2,1} & -z_{1,1} + z_{1,3}z_{2,1} + z_{2,2} & 1 & 0
\end{bmatrix}. 
\end{equation}
Meanwhile,
\[F \coloneqq (w_0M)^{-1}\mathsf{N}(w_0M) =  \begin{bmatrix}
    0  &  0 & 0 & 0 \\
    1  &  0 & 0 & 0 \\  
    -x_{2,2}+x_{3,1}  &  1 & 0 & 0 \\ 
    -x_{1,2} + x_{1,3}(x_{2,2} - x_{3,1}) + x_{2,1} & -x_{1,3}+x_{2,2} & 1 & 0
\end{bmatrix}\]
and
\begin{equation} \label{eq: conj vw} v_w^{-1}Fv_w = \begin{bmatrix}
    0  &  1 & 0 & 0 \\
    0  &  0 & 0 & 0 \\  
    1  &  -x_{2,2}+x_{3,1} & 0 & 0 \\ 
    -x_{1,3} +x_{2,2} & -x_{1,2} + x_{1,3}(x_{2,2} - x_{3,1}) + x_{2,1} & 1 & 0
\end{bmatrix}.
\end{equation} 
Since $\Omega_w = \psi_w((w_0M)v_w)$ as observed in Remark~\ref{rmk: omega_w}, it follows by Lemma \ref{lem:psiCommutesInverse} that $\Omega_w^{-1} = \psi_w(v_w^{-1}(w_0M)^{-1})$.
Applying $\psi_w$ entry-by-entry, it follows that $\psi_w(v_w^{-1}F v_w) = \Omega_w^{-1} \mathsf N \Omega_w$.
In particular, the generators satisfy $g^w_{k,\ell} = \psi_w( [v_w^{-1}F v_w]_{k,\ell} )$. For example, comparing the $(3,2)$-th entries of the RHS of~\eqref{eq: conj Omegaw} and~\eqref{eq: conj vw}, we can see that 
\begin{equation*}
    \begin{split} 
    \psi_w(-x_{2,2}+x_{3,1}) 
& = \psi_w(-x_{2,2}) + \psi_w(x_{3,1}) \\
& = - z_{2,1} + 0 = - z_{2,1} 
    \end{split} 
\end{equation*}
as claimed. We will formalize this computation in Lemma~\ref{lem:psiImage} below. 
\exampleqed
\end{example}

We have just seen in the above example that the generators $g^w_{k,\ell}$ in this special case have a simple algebraic relation to certain of the $f^{w_0}_{i,j}$ generators from Definition~\ref{def:HessPatchIdealAndf^w}, since the matrix entries of $F$ from the above example are precisely these generators. 
In the discussion that follows, the overarching theme is that we can use this correspondence to effectively translate results on the $w_0$-patch as obtained in \cite{DSH} to the regular nilpotent Hessenberg Schubert cells $\mathrm{Hess}(\mathsf{N},h) \cap X^w_\circ$. 
Before proceeding further into the details of the translation, however, we need first to make precise the permutations $w$ for which this translation makes sense. As we will show below, it turns out that the only Schubert cells $X^w_\circ$ for which the intersection $\mathrm{Hess}(\mathsf{N}, h) \cap X^w_\circ$ is non-empty (and hence the questions about local defining ideals is non-vacuous) are the permutations $w$ lying in the $\mathsf{S}$-fixed point set of $\mathrm{Hess}(\mathsf{N},h)$. The next lemma makes this precise. 

\begin{lemma} \label{lem:whenNonEmpty}
Let $w \in S_n$. Let $\mathsf{S}$ denote the copy of $\mathbb{C}^*$ acting on $\mathrm{Hess}(\mathsf{N},h)$ as described in Section~\ref{subsec:S-action}. Then $\mathrm{Hess}(\mathsf{N},h) \cap X^w_\circ \neq \emptyset$ if and only if $w \in \mathrm{Hess}(\mathsf{N},h)^{\mathsf{S}}$. 
\end{lemma}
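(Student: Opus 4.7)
The plan is to combine two observations: first, the permutation flag $wB/B$ lies in $X^w_\circ$ (as its basepoint) and is an $\mathsf{S}$-fixed point (being fixed by the full maximal torus of which $\mathsf{S}$ is a subgroup); second, the $\mathsf{S}$-weights on the affine coordinates of $X^w_\circ$ have uniform sign, so every $\mathsf{S}$-orbit in $X^w_\circ$ limits to $wB/B$ inside the cell.

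For the $(\Leftarrow)$ direction, if $w \in \mathrm{Hess}(\mathsf{N},h)^{\mathsf{S}}$ then $wB/B \in \mathrm{Hess}(\mathsf{N},h)$, and since $wB/B \in BwB/B = X^w_\circ$, the intersection is non-empty.

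For $(\Rightarrow)$, let $p \in \mathrm{Hess}(\mathsf{N},h) \cap X^w_\circ$. The plan is to exhibit a one-parameter subgroup of $\mathsf{S}$ along which $\underline{t} \cdot p$ limits to $wB/B$ inside $X^w_\circ$. Running the computation from Section~\ref{subsec:S-action} on the coordinates $\Omega_w$ of~\eqref{eqn:Omega_ij} (in place of the $\mathcal{N}_w$ coordinates), one sees that the $\mathsf{S}$-action scales each coordinate $z_{i,j}$ by $t^{i-w(j)}$. Crucially, the defining condition on the free entries of $\Omega_w$ forces $i<w(j)$, so every exponent $i-w(j)$ is strictly negative. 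Hence, letting $t \to \infty$, every coordinate $z_{i,j}(\underline{t} \cdot p) \to 0$. Since $X^w_\circ$ is identified with an affine space via the $z_{i,j}$, the limit of $\underline{t} \cdot p$ exists \emph{in} $X^w_\circ$ and equals the origin of this affine space, which is precisely the permutation flag $wB/B$.

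To finish, we use that $\mathrm{Hess}(\mathsf{N},h)$ is closed in the flag variety and $\mathsf{S}$-stable (as recorded in Section~\ref{subsec:S-action}). Hence the orbit $\mathsf{S} \cdot p$ is contained in $\mathrm{Hess}(\mathsf{N},h)$ and so is its closure, so in particular the limit point $wB/B$ lies in $\mathrm{Hess}(\mathsf{N},h)$. Combined with the fact that $wB/B$ is $\mathsf{S}$-fixed, this yields $w \in \mathrm{Hess}(\mathsf{N},h)^{\mathsf{S}}$. The main subtlety to watch is ensuring that the limit of $\underline{t} \cdot p$ as $t \to \infty$ lies in the open cell $X^w_\circ$ rather than somewhere in the boundary $\overline{X^w_\circ} \setminus X^w_\circ$; this is precisely what the uniform negativity of all weights $i-w(j)$ guarantees, forcing the limit to be the cell's unique $\mathsf{S}$-fixed point.
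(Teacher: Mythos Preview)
Your proof is correct and follows essentially the same approach as the paper's own proof: both directions are argued identically, using the $\mathsf{S}$-action on the affine coordinates $z_{i,j}$ of $X^w_\circ$, the uniform negativity of the weights $i-w(j)$ to force the limit as $t\to\infty$ to be the permutation flag $wB/B$, and the closedness of $\mathrm{Hess}(\mathsf{N},h)$ to conclude. Your explicit remark that the limit stays inside the open cell (rather than escaping to the boundary) makes transparent a point that the paper's proof leaves implicit.
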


\begin{proof} 
Since $w \in X^w_\circ$ for any $w \in S_n$, one direction is clear. Indeed, since $w\in X^w_\circ$, if $w \in \mathrm{Hess}(\mathsf{N},h)$ then $\mathrm{Hess}(\mathsf{N},h) \cap X^w_\circ \neq \emptyset$. 

It remains to show the other direction. 
Suppose there exists a point $gB \in \mathrm{Hess}(\mathsf{N},h) \cap X^w_\circ$, so it is non-empty. 
Following our notational convention in~\eqref{eqn:Omega_ij} for points in the Schubert cell $X^w_\circ$, we may represent this point $gB$ as a matrix $\Omega_w$ whose $(i,j)$-th entry is $1$ if $i=w(j)$, $0$ if $i>w(j)$ or $j > w^{-1}(i)$, and denoted $z_{i,j}$ otherwise.
We already observed in Section~\ref{subsec:S-action} that $\mathrm{Hess}(\mathsf{N},h)$ is preserved under the $\mathsf{S}$-action for our choice of $\mathsf{S}$. 
Schubert cells are also preserved, since they are preserved by the maximal torus action, and $\mathsf{S}$ is a subgroup of the maximal torus. 
Thus $\mathsf{S}$ preserves $\mathrm{Hess}(\mathsf{N},h) \cap \Omega_w$, and we conclude that the $\mathsf{S}$-orbit $\mathsf{S} \cdot gB$ is also contained in $\mathrm{Hess}(\mathsf{N},h) \cap \Omega_w$. 
Using the matrix representation of the point $gB$ and the definition of $\mathsf{S}$, it is straightforward to compute, following the outline given in Section~\ref{subsec:S-action}, that for an element $\underline{t} \coloneqq \mathrm{diag}(t, t^2, \ldots, t^n)$ for $t \in \mathbb{C}^*$, the element $\underline{t} \cdot gB$ can be represented by a matrix in $GL_n(\mathbb{C})$ whose $(i,j)$-th entry is $0$ if $i>w(j)$ or $j> w^{-1}(i)$, is $t^i$ if $i=w(j)$, and $t^i z_{i,j}$ otherwise. 
Such a matrix is not in the form given in~\eqref{eqn:Omega_ij} so we adjust by multiplication on the right by a torus element in $B$ given by $\mathrm{diag}(t^{-w(1)}, t^{-w(2)}, \ldots, t^{-w(n)})$, which then yields a matrix specifying the same flag, the entries of which are $0$ if $i>w(j)$ or $j > w^{-1}(i)$, $1$ if $i=w(j)$, and $t^{i-w(j)}z_{i,j}$ otherwise. 
Now we observe that by assumption, if the $(i,j)$-th entry is $t^{i-w(j)}z_{i,j}$, then $i<w(j)$ and therefore $i-w(j)<0$. 
This implies that as we take the limit as $t \to \infty$, the limit point of this orbit is precisely the permutation flag $w$. Since $\mathrm{Hess}(\mathsf{N},h)$ is closed, the limit point $w$ is also contained in $\mathrm {Hess}(\mathsf{N},h)$, and it is evident that $w$ is $\mathsf{S}$-fixed by the above computation. 
Hence $w \in \mathrm{Hess}(\mathsf{N},h)^{\mathsf{S}}$, as desired. 
\end{proof} 

The above lemma shows that we can restrict attention to permutations $w$ contained in $\mathrm{Hess}(\mathsf{N},h)^{\mathsf{S}}$. 
The third author and Abe, Horiguchi, and Masuda gave an explicit characterization of this set, as follows. 

\begin{lemma}[{\cite[Lemma 2.3]{AHHM}}] \label{lem:fixedPoint}
    Let $h$ be any Hessenberg function.
    Then, \[ {\rm Hess}(\mathsf N, h)^{\mathsf S} = \{ w \in S_n \mid w^{-1}(w(j)-1) \le h(j) \text{ for all } j \in [n] \}, \] where we take the convention that $w(0) = 0$ for all $w \in S_n$.
\end{lemma}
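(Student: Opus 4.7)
The plan is two-fold. First I would show that every $\mathsf{S}$-fixed point of $\mathrm{Hess}(\mathsf{N},h)$ must be a permutation flag $wB$ with $w \in S_n$; then I would use the matrix criterion~\eqref{eq: matrix eq for Hess} to pick out exactly which permutations $w$ satisfy $wB \in \mathrm{Hess}(\mathsf{N},h)$.

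For the first part, since $\mathsf{S}$ sits inside the standard diagonal maximal torus $T \subseteq GL_n(\mathbb{C})$ and the exponents $1,2,\ldots,n$ defining $\mathsf{S}$ are pairwise distinct, $\mathsf{S}$ is a generic one-parameter subgroup of $T$: at each $T$-fixed point $wB$ of $\mathrm{Flags}(\mathbb{C}^n)$ the $T$-weights on the tangent space are differences of distinct standard characters, all of which restrict to nonzero integers on $\mathsf{S}$. It follows that $\mathrm{Flags}(\mathbb{C}^n)^{\mathsf{S}} = \mathrm{Flags}(\mathbb{C}^n)^{T} = \{wB \mid w \in S_n\}$, and since $\mathrm{Hess}(\mathsf{N},h)$ is a closed $\mathsf{S}$-stable subvariety of $\mathrm{Flags}(\mathbb{C}^n)$, this forces $\mathrm{Hess}(\mathsf{N},h)^{\mathsf{S}} \subseteq \{wB \mid w \in S_n\}$.

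For the second part, by~\eqref{eq: matrix eq for Hess} the permutation flag $wB$ lies in $\mathrm{Hess}(\mathsf{N},h)$ if and only if $w^{-1}\mathsf{N}w \in H(h)$. Using $\mathsf{N} e_k = e_{k-1}$ (with the convention $e_0 \coloneqq 0$) and the column convention $w e_j = e_{w(j)}$, a direct computation gives
\[
(w^{-1}\mathsf{N}w)\,e_j \;=\; w^{-1} e_{w(j)-1} \;=\; e_{w^{-1}(w(j)-1)},
\]
with the conventions $w(0) = 0$ and $w^{-1}(0) = 0$. Thus the $j$-th column of $w^{-1}\mathsf{N}w$ has a single nonzero entry (equal to $1$) in row $w^{-1}(w(j)-1)$, and is identically zero when $w(j) = 1$. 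The defining condition for $H(h)$ is that all nonzero entries of column $j$ occur in rows $i \leq h(j)$, which by the computation above is equivalent to
\[
w^{-1}(w(j)-1) \;\leq\; h(j) \qquad \text{for every } j \in [n],
\]
exactly the stated characterization.

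The only genuine (minor) obstacle is the genericity step in the first paragraph, namely ensuring that $\mathrm{Flags}(\mathbb{C}^n)^{\mathsf{S}}$ does not acquire any non-$T$-fixed points from the one-dimensional circle; the rest is routine bookkeeping with matrix entries once the conventions $e_0 = 0$ and $w(0) = 0$ are set up.
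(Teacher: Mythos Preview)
Your argument is correct. The paper does not actually supply a proof of this lemma; it simply cites \cite[Lemma~2.3]{AHHM} and uses the result as a black box. So there is no ``paper's own proof'' to compare against---you have in fact filled in what the paper omits.

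Both halves of your argument are sound. For the first part, your genericity claim is the standard one: the $T$-weights on the tangent space of $\mathrm{Flags}(\mathbb{C}^n)$ at $wB$ are the roots $e_i - e_j$ with $i \neq j$, and these restrict to the nonzero integers $i - j$ along the cocharacter $t \mapsto \mathrm{diag}(t,t^2,\ldots,t^n)$, so $\mathsf{S}$ has the same fixed points as $T$. An alternative route, more in the spirit of the paper's explicit coordinates, is to use the computation in Section~\ref{subsec:S-action}: in the chart $\mathcal{N}_w$ the action sends $x_{i,j} \mapsto t^{i-w(j)} x_{i,j}$, and since every indeterminate has $i \neq w(j)$, the only fixed point in $\mathcal{N}_w$ is the origin, i.e.\ the permutation flag $wB$ itself. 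Either argument dispatches the ``minor obstacle'' you flagged.

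For the second part, your matrix computation $(w^{-1}\mathsf{N}w)e_j = e_{w^{-1}(w(j)-1)}$ is exactly right and matches the conventions set up in~\eqref{eqn:permutationConvention} and~\eqref{eqn:N}; the membership condition $w^{-1}\mathsf{N}w \in H(h)$ then reads off column by column as stated.
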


We can now relate the $f^{w_0}_{i,j}$ polynomials on the $w_0$-chart to the $g^w_{k,\ell}$ polynomials on the regular nilpotent Hessenberg Schubert cell. The following is a straightforward computation. 

\begin{lemma} \label{lem:psiImage}
    For any $w \in S_n$ and any $k, \ell \in [n]$, we have $g^w_{k, \ell} = \psi_w(f^{w_0}_{v_w(k), v_w(\ell)})$.
\end{lemma}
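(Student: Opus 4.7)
The strategy is essentially to formalize the computation illustrated in Example \ref{eg:motivation-part2}. The whole argument is a direct chain of equalities, chaining together the key identities involving $\psi_w$ already recorded in Remark \ref{rmk: omega_w} and Lemma \ref{lem:psiCommutesInverse}, combined with the standard fact about how a permutation matrix acts by conjugation on the indices of an arbitrary matrix.

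First, I would start from the definition $g^w_{k,\ell} = [\Omega_w^{-1} \mathsf{N} \Omega_w]_{k,\ell}$, substitute $\Omega_w = \psi_w((w_0M)v_w)$ from Remark \ref{rmk: omega_w}, and use Lemma \ref{lem:psiCommutesInverse} to rewrite the inverse as $\psi_w(((w_0M)v_w)^{-1}) = \psi_w(v_w^{-1}(w_0M)^{-1})$. The next observation is that $\psi_w$ acts entry-by-entry and fixes scalar entries, so $\psi_w(\mathsf{N}) = \mathsf{N}$. Since $\psi_w$ is compatible with matrix multiplication (again because it is applied entry-by-entry, and matrix multiplication is polynomial in the entries), one can pull $\mathsf{N}$ inside and obtain
\[
\Omega_w^{-1} \mathsf{N}\, \Omega_w \;=\; \psi_w\!\left( v_w^{-1} (w_0M)^{-1} \mathsf{N} (w_0M) v_w \right) \;=\; \psi_w(v_w^{-1} F v_w),
\]
where $F$ denotes the matrix $(w_0M)^{-1}\mathsf{N}(w_0M)$ whose $(i,j)$-th entry is $f^{w_0}_{i,j}$ by Definition \ref{def:HessPatchIdealAndf^w}.

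The remaining step is to identify the $(k,\ell)$-th entry of $v_w^{-1} F v_w$. Under the column-convention for permutation matrices fixed in \eqref{eqn:permutationConvention}, a short index-chase shows that for any matrix $A$ and any permutation $\sigma$,
\[
[\sigma^{-1} A \sigma]_{k,\ell} \;=\; A_{\sigma(k), \sigma(\ell)}.
\]
Applying this with $\sigma = v_w$ and $A = F$ gives $[v_w^{-1} F v_w]_{k,\ell} = F_{v_w(k), v_w(\ell)} = f^{w_0}_{v_w(k), v_w(\ell)}$. Combining this with the displayed equation above yields $g^w_{k,\ell} = \psi_w(f^{w_0}_{v_w(k), v_w(\ell)})$, as claimed.

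There is no real obstacle in this argument; the only minor subtlety is to be careful about the convention for permutation matrices when verifying the conjugation formula, so that the composition $\sigma^{-1} A \sigma$ picks out entries at $(\sigma(k), \sigma(\ell))$ and not, say, at $(\sigma^{-1}(k), \sigma^{-1}(\ell))$. Once that convention is pinned down, the proof is a one-line compatibility check between the ring homomorphism $\psi_w$, matrix multiplication, and matrix inversion.
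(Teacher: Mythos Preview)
Your proposal is correct and follows essentially the same approach as the paper's proof: both start from the definition of $g^w_{k,\ell}$, invoke Remark~\ref{rmk: omega_w} and Lemma~\ref{lem:psiCommutesInverse}, use that $\psi_w$ is a ring homomorphism applied entry-by-entry to pull it outside the matrix product, and then finish by observing that conjugation by $v_w$ permutes rows and columns so that the $(k,\ell)$-entry of $v_w^{-1}Fv_w$ is $f^{w_0}_{v_w(k),v_w(\ell)}$. Your write-up is slightly more explicit about the conjugation formula and the permutation-matrix convention, but there is no substantive difference in strategy.
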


\begin{proof}
    By Remark \ref{rmk: omega_w}, Lemma~\ref{lem:psiCommutesInverse}, and  Definition~\ref{def:HessSchubertCell} of the  $g^w_{k,\ell}$, we can compute that 
    \begin{align*}
        g_{k,\ell}^w &= [\Omega_w^{-1}\mathsf N \Omega_w]_{k,\ell} = [(\psi_w((w_0M)v_w))^{-1}\mathsf N(\psi_w((w_0M)v_w))]_{k,\ell} \\
        &= [\psi_w(((w_0M)v_w)^{-1})\mathsf N(\psi_w((w_0M)v_w))]_{k,\ell} \\
        \intertext{and since $\psi_w$ is a ring homomorphism,}
        &= [\psi_w((v_w^{-1}(w_0M)^{-1})\mathsf N(w_0M)v_w)]_{k,\ell} \\ 
        &= \psi_w([v_w^{-1}(w_0M)^{-1}\mathsf N(w_0M)v_w]_{k,\ell})
    \end{align*}
    where the last equality holds since $\psi_w$ is applied to the matrix entry-by-entry.
    The claim of the lemma then follows from the facts that multiplication of $(w_0M)^{-1}\mathsf N(w_0M)$ on the left by $v_w^{-1}$ permutes rows and multiplication on the right by $v_w$ permutes columns.
\end{proof}

Although the above result holds for any $k, \ell \in [n]$, in the next sections we will always impose further restrictions on $(k, \ell)$ so that they correspond to the generators $g^w_{k,\ell}$ for the ideal $J_{w, h}$.

\section{A Gr\"obner basis for defining ideals of regular nilpotent Hessenberg Schubert cells} \label{sec: Grobner for Hess Schubert cells}

In the last section, we provided an explicit relationship between the generators $g^w_{k,\ell}$ of the local defining ideal of regular nilpotent Hessenberg Schubert cells and their corresponding entries $f^{w_0}_{v_w(k), v_w(\ell)}$ in the $w_0$-chart. The main purpose of the present section is to show that these generators in fact form a Gr\"obner basis for the ideal $J_{w,h}$. The main strategy is to concretely describe the relationship between the initial terms of the $g^w_{k,\ell}$ and those of the corresponding $f^{w_0}_{v_w(k), v_w(\ell)}$ through the ring homomorphism $\psi_w$. 

We first note that if a polynomial $f^{w_0}_{a,b}$ is equal to either $0$ or $1$, which occurs when $a \leq b+1$, then $\psi_w$ will map $f^{w_0}_{a,b}$ to itself and the initial terms are vacuously preserved. 
From Lemma \ref{lem:f^w0-init}\eqref{lem-part:f^w0-init} it follows that the case in which $f^{w_0}_{a,b}$ has a non-constant initial term is when $a > b+1$. Therefore, in comparing initial terms of $g^{w}_{k,\ell}$ with the corresponding $f^{w_0}_{v_w(k),v_k(\ell)}$ via Lemma~\ref{lem:psiImage}, we need only consider the case when $v_w(k) > v_w(\ell) + 1$.

In order to discuss Gr\"obner bases, we first need to fix a monomial order. Our first step is therefore to adapt the monomial order $<_n$ from Definition \ref{def:w0MonomialOrder} to a monomial order on $\mathbb C[\mathbf z_w]$. We do this using the permutation $v_w$.

\begin{definition} \label{def:cellMonomialOrder}
    Define a lexicographic monomial order $<_n^w$ on $\mathbb C[\mathbf z_w]$ by $z_{i,j} >_n^w z_{i',j'}$ if $i < i'$ or $i=i'$ and $v_w(j) < v_w(j')$.
\end{definition}

\begin{remark}
    In the case that $w = w_0$, the permutation $v_w$ is the identity permutation, so we recover from $<_n^w$ exactly the monomial order $<_n$ from Definition \ref{def:w0MonomialOrder}.
\end{remark}

\begin{lemma} \label{lem:nonzeroPsi}
Let $w \in \mathrm{Hess}(\mathsf{N},h)^{\mathsf{S}}$. Assume $k,\ell\in [n]$ satisfy $k \geq h(\ell)$ and $v_w(k) > v_w(\ell)+1$. Then, 
\begin{enumerate} 
\item The initial term of $f^{w_0}_{v_w(k), v_w(\ell)}$ does not get mapped to zero under $\psi_w$.
\item \label{lem-part:psi injective} Furthermore, if $(k',\ell')$ also satisfies the hypotheses and $(k',\ell') \neq (k, \ell)$,
    then $\psi_w (\init_{<_n} (f^{w_0}_{v_w(k'), v_w(\ell')}) ) \neq \psi_w ( \init_{<_n} (f^{w_0}_{v_w(k), v_w(\ell)} )).$
    That is, $\psi_w$ is injective when we restrict its domain to $\mathbb C[\mathbf x_{w_0} \setminus D_w]$.
\end{enumerate} 
\end{lemma}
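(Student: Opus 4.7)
The plan is to reduce both statements to a direct combinatorial verification by first computing the initial term via Lemma~\ref{lem:f^w0-init}, then unwinding the definition of $\psi_w$. The hypothesis $v_w(k) > v_w(\ell) + 1$ is exactly the condition under which Lemma~\ref{lem:f^w0-init}\eqref{lem-part:f^w0-init} applies to $f^{w_0}_{v_w(k),v_w(\ell)}$, yielding
\[
\init_{<_n}(f^{w_0}_{v_w(k),v_w(\ell)}) = -x_{n+1-v_w(k),\,v_w(\ell)+1}.
\]
Using $v_w = w_0 w$, I would first rewrite this simply as $-x_{w(k),\,n+2-w(\ell)}$. Both parts of the lemma then become statements about this single explicit indeterminate of $\mathbb{C}[\mathbf{x}_{w_0}]$.

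For part (1), the goal is to show $x_{w(k),\,n+2-w(\ell)} \notin D_w$. The first step is to check the index inequality $i + j \leq n$ in the definition of $D_w$: substituting $i = w(k)$ and $j = n+2-w(\ell)$ and using $v_w(k) > v_w(\ell)+1$, which translates via $v_w = w_0 w$ to $w(\ell) \geq w(k) + 2$, gives $i + j \leq n$. Hence membership in $D_w$ would require also $v_w^{-1}(j) > w^{-1}(i)$. I would then compute $v_w^{-1}(j) = w^{-1}(w_0(j)) = w^{-1}(w(\ell)-1)$ and $w^{-1}(i) = k$, reducing membership in $D_w$ to the inequality $w^{-1}(w(\ell)-1) > k$. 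Here is where the remaining hypotheses enter: the fixed point condition $w \in \mathrm{Hess}(\mathsf{N},h)^{\mathsf{S}}$ combined with Lemma~\ref{lem:fixedPoint} gives $w^{-1}(w(\ell)-1) \leq h(\ell)$, and combining with the assumption $h(\ell) \leq k$ yields the opposite inequality $w^{-1}(w(\ell)-1) \leq k$. Hence $x_{w(k),\,n+2-w(\ell)} \notin D_w$, and consequently $\psi_w$ sends it to the nonzero indeterminate $-z_{w(k),\,w^{-1}(w(\ell)-1)} \in \mathbb{C}[\mathbf{z}_w]$.

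For part (2), the calculation from part (1) shows
\[
\psi_w\bigl(\init_{<_n} f^{w_0}_{v_w(k),v_w(\ell)}\bigr) = -z_{w(k),\,w^{-1}(w(\ell)-1)},
\]
and similarly for $(k',\ell')$. Injectivity on initial terms thus reduces to showing that if $(w(k), w^{-1}(w(\ell)-1)) = (w(k'), w^{-1}(w(\ell')-1))$, then $(k,\ell) = (k',\ell')$. The first coordinate forces $k = k'$ by bijectivity of $w$, and then the second coordinate forces $w(\ell)-1 = w(\ell')-1$, hence $\ell = \ell'$, again by bijectivity.

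The only non-routine step is part (1), and its crux is recognizing that the fixed point hypothesis $w \in \mathrm{Hess}(\mathsf{N},h)^{\mathsf{S}}$, via Lemma~\ref{lem:fixedPoint}, is precisely the numerical condition needed to defeat the ``right-of-the-1'' constraint defining $D_w$. Everything else is index bookkeeping driven by $v_w = w_0 w$.
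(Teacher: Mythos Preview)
Your proof is correct and follows essentially the same approach as the paper's: identify the initial term via Lemma~\ref{lem:f^w0-init}, then use the fixed-point characterization of Lemma~\ref{lem:fixedPoint} together with $k \geq h(\ell)$ to rule out membership in $D_w$. Your early simplification of the indices to $(w(k),\,n+2-w(\ell))$ via $v_w = w_0 w$ makes the bookkeeping a bit cleaner than the paper's, but the substance is identical; for part~(2) the paper phrases the injectivity at the level of $\psi_w$ restricted to $\mathbb{C}[\mathbf{x}_{w_0}\setminus D_w]$ (which is immediate from the definition), whereas you verify it on the specific initial terms, and both arguments are straightforward.
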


\begin{proof}
     Suppose $k, \ell$ satisfy $k \geq h(\ell)$ and $v_w(\ell)+1 > v_w(k)$. 
        By Lemma \ref{lem:f^w0-init}\eqref{lem-part:f^w0-init-seq}, the initial term is the indeterminate $\init_{<_n}(f^{w_0}_{v_w(k),v_w(\ell)}) = -x_{n+1-v_w(k), v_w(\ell)+1}$.  To show that this does not map to $0$ under $\psi_w$, we will show that $x_{n+1-v_w(k), v_w(\ell)+1} \not \in D_w$. By~\eqref{eqn: def Dw}, this is equivalent to showing that 
        $$
        n+1-v_w(k)+v_w(\ell)+1 \leq n \quad \textup{and} \quad v_w^{-1}(v_w(\ell)+1) \leq  w^{-1}(n+1-v_w(k)).
        $$
        The first inequality is equivalent to $v_w(\ell) +2 \leq v_w(k)$, and this follows from the assumption on $k$ and $\ell$. 
        For the second inequality, observe that
        $$
        n+1-v_w(k) = w_0(v_w(k))
        $$
        and hence 
        $$
        w^{-1}(n+1-v_w(k)) = w^{-1}(w_0v_w(k)) = (w_0 v_w)^{-1}(w_0v_w(k)) = k.
        $$
        Hence, to prove the second inequality it suffices to show that 
        $$
        v_w^{-1}(v_w(\ell)+1) \leq k.
        $$
        Since $v_w = w_0 w$, we may also compute the left-hand side as 
        \begin{equation*} 
        \begin{split} 
    v_w^{-1}(v_w(\ell)+1) & = w^{-1}w_0(w_0w(\ell)+1) \\
    & = w^{-1}w_0(n+1-w(\ell)+1) \\
    & = w^{-1}(w(\ell)-1)
        \end{split} 
        \end{equation*} 
        so we need to prove 
        $$
        w^{-1}(w(\ell)-1) \leq k.
        $$
    Recall that $w\in \mathrm{Hess}(\mathsf{N},h)^{\mathsf{S}}$. By Lemma~\ref{lem:fixedPoint} we know therefore that 
    $$
    w^{-1}(w(\ell)-1) \leq h(\ell)
    $$
    and by our assumption $h(\ell) \leq k$, and hence we obtain the inequality 
    $$
    w^{-1}(w(\ell)-1) \leq k
    $$
    as desired. This proves the first claim. 

    Now if $\psi_w(x_{i,j}) = \psi_w(x_{i',j'})$ and are nonzero, by Definition \ref{def:psi} we must have $i = i'$ and $v_w^{-1}(j) = v_w^{-1}(j')$, which forces $j = j'$, so $x_{i,j} = x_{i',j'}$.
    That is, $\psi_w|_{\mathbb C[\mathbf x_{w_0}\setminus D_w]}$ is injective. This proves the second claim. 
\end{proof}

\begin{lemma} \label{lem:compatibleInit}
    Under the hypotheses of Lemma \ref{lem:psiImage}, $\psi_w$ respects the monomial orders $<_n^w$ and $<_n$.
    That is, $\init_{<_n^w}(\psi_w(f^{w_0}_{v_w(k),v_w(\ell)})) = \psi_w(\init_{<_n}(f^{w_0}_{v_w(k),v_w(\ell)}))$.
\end{lemma}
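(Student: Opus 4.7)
The plan is first to dispose of the trivial case where $v_w(k) \le v_w(\ell) + 1$: as noted in the paragraph preceding the lemma, $f^{w_0}_{v_w(k), v_w(\ell)}$ is then the constant $0$ or $1$, so both sides of the desired identity equal this constant. For the remainder I would focus on the substantive case $v_w(k) > v_w(\ell) + 1$ and set $a \coloneqq n + 1 - v_w(k)$ and $b \coloneqq v_w(\ell) + 1$. Lemma \ref{lem:f^w0-init} then supplies three essential facts: (i) $\init_{<_n}(f^{w_0}_{v_w(k), v_w(\ell)}) = -x_{a, b}$; (ii) $x_{a,b}$ appears in exactly one monomial of $f^{w_0}_{v_w(k), v_w(\ell)}$; and (iii) every other indeterminate $x_{i,j}$ occurring in $f^{w_0}_{v_w(k), v_w(\ell)}$ satisfies either $i > a$, or $i = a$ and $j > b$. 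Under the standing assumptions $w \in \mathrm{Hess}(\mathsf{N}, h)^{\mathsf{S}}$ and $k \ge h(\ell)$ inherited from the setup of the generators of $J_{w,h}$, Lemma \ref{lem:nonzeroPsi} then gives $x_{a,b} \notin D_w$, so $\psi_w(-x_{a,b}) = -z_{a, v_w^{-1}(b)}$ is nonzero.

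The key step is to translate the pointwise comparison in $<_n$ into one in $<_n^w$ through $\psi_w$. For any indeterminate $x_{i,j} \ne x_{a,b}$ appearing in $f^{w_0}_{v_w(k), v_w(\ell)}$ with $x_{i,j} \notin D_w$, I would verify $z_{i, v_w^{-1}(j)} <_n^w z_{a, v_w^{-1}(b)}$ directly from Definition \ref{def:cellMonomialOrder}: if $i > a$ the inequality is immediate, while if $i = a$ and $j > b$, one has $v_w(v_w^{-1}(j)) = j > b = v_w(v_w^{-1}(b))$, which again forces the inequality. Because $<_n^w$ is lexicographic, any monomial built from variables that are all strictly smaller than $z_{a, v_w^{-1}(b)}$ is itself strictly smaller than $z_{a, v_w^{-1}(b)}$. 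Therefore the image under $\psi_w$ of every monomial of $f^{w_0}_{v_w(k), v_w(\ell)}$ other than $-x_{a,b}$ is either zero (if any factor lies in $D_w$) or strictly less than $z_{a, v_w^{-1}(b)}$ in $<_n^w$.

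It then remains to verify that the leading monomial $-z_{a, v_w^{-1}(b)}$ is not cancelled in $\psi_w(f^{w_0}_{v_w(k), v_w(\ell)})$. Since $\psi_w$ restricted to $\mathbb{C}[\mathbf{x}_{w_0} \setminus D_w]$ is injective (Lemma \ref{lem:nonzeroPsi}\eqref{lem-part:psi injective}) and acts multiplicatively, it sends distinct monomials of this subring to distinct monomials in $\mathbb{C}[\mathbf{z}_w]$; combined with fact (ii), this precludes any other monomial of $f^{w_0}_{v_w(k), v_w(\ell)}$ from contributing a $\pm z_{a, v_w^{-1}(b)}$ term. Putting everything together yields $\init_{<_n^w}(\psi_w(f^{w_0}_{v_w(k), v_w(\ell)})) = -z_{a, v_w^{-1}(b)} = \psi_w(\init_{<_n}(f^{w_0}_{v_w(k), v_w(\ell)}))$, as desired. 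The only potentially delicate point will be the combinatorial compatibility between $<_n$ and $<_n^w$ under the second-index relabelling $j \mapsto v_w^{-1}(j)$ built into $\psi_w$, but this is exactly the compatibility that the definition of $<_n^w$ is engineered to encode, so the verification reduces to the short case analysis indicated above.
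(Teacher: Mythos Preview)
Your proof is correct and follows essentially the same approach as the paper's: both rely on Lemma~\ref{lem:f^w0-init}\eqref{lem-part:f^w0-init-indet} to know the initial term is a single indeterminate appearing exactly once, on Lemma~\ref{lem:nonzeroPsi} to know it survives $\psi_w$, and on the direct verification that $x_{i,j} >_n x_{i',j'}$ (for variables not in $D_w$) implies $\psi_w(x_{i,j}) >^w_n \psi_w(x_{i',j'})$ via the same two-case analysis on row and column indices. Your version is somewhat more explicit than the paper's in spelling out why no cancellation of the candidate leading term can occur and in flagging that the hypotheses $w \in \mathrm{Hess}(\mathsf{N},h)^{\mathsf{S}}$ and $k \ge h(\ell)$ are needed to invoke Lemma~\ref{lem:nonzeroPsi}, but these are refinements rather than a different argument.
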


\begin{proof}
    By the result of Lemma \ref{lem:f^w0-init}\eqref{lem-part:f^w0-init-indet}, the initial term of $f^{w_0}_{v_w(k),v_w(\ell)}$ is an indeterminate that appears exactly once in $f^{w_0}_{v_w(k),v_w(\ell)}$ and by Lemma \ref{lem:nonzeroPsi}, this initial term does not get mapped to zero.
    As a result, it suffices to show that if $x_{i,j}, x_{i',j'} \notin D_w$ with $x_{i,j} >_n x_{i',j'}$, then $\psi_w(x_{i,j}) >_n^w \psi_w(x_{i',j'})$.

    So suppose that $x_{i,j}, x_{i',j'} \notin D_w$ with $x_{i,j} >_n x_{i',j'}$.
    By Definition \ref{def:w0MonomialOrder}, we have that either $i < i'$, or, $i = i'$ and $j < j'$.
    Then by Definition \ref{def:psi}, we have that $\psi_w(x_{i,j}) >_n^w \psi_w(x_{i',j'})$ if and only if $z_{i,v_w^{-1}(j)} >_n^w z_{i',v_w^{-1}(j')}$.
    The case $i < i'$ is immediate, so assume that $i = i'$.
    Then, the inequality holds if and only if $v_w(v_w^{-1}(j)) < v_w(v_w^{-1}(j'))$, i.e., $j < j'$.
\end{proof}

\begin{corollary} \label{cor: init Jwh} 
Let $J_{w,h}$ be the regular nilpotent Hessenberg Schubert cell ideal defining $\mathrm{Hess}(\mathsf{N},h) \cap X^w_\circ$ as a subset of $X^w_\circ$. Then, with respect to the monomial order $<^w_n$ defined above, the initial terms of the generators $g^w_{k,\ell}$ of $J_{w,h}$ are distinct indeterminates. 
\end{corollary}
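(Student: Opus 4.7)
The plan is to transport the initial-term analysis for $g^w_{k,\ell}$ across the ring map $\psi_w$ to the corresponding question for $f^{w_0}_{v_w(k), v_w(\ell)}$, where the results of \cite{DSH} recalled in Lemma~\ref{lem:f^w0-init} already control the initial terms. The key identity is Lemma~\ref{lem:psiImage}, which states $g^w_{k,\ell} = \psi_w(f^{w_0}_{v_w(k), v_w(\ell)})$ for every pair $(k,\ell)$ indexing a generator of $J_{w,h}$.

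The argument would split according to whether $v_w(k) > v_w(\ell) + 1$. When this inequality fails, $f^{w_0}_{v_w(k), v_w(\ell)}$ is a constant (either $0$ or $1$), as noted in the paragraph preceding Definition~\ref{def:cellMonomialOrder}, and hence so is $g^w_{k,\ell}$. Since $J_{w,h}$ is the defining ideal of the nonempty cell $\mathrm{Hess}(\mathsf{N},h) \cap X^w_\circ$, it is a proper ideal, so no generator can equal $1$; thus such $g^w_{k,\ell}$ must vanish and may be discarded from the generating set. By Lemma~\ref{lem:whenNonEmpty}, the nonemptiness of the cell is equivalent to $w \in \mathrm{Hess}(\mathsf{N},h)^{\mathsf{S}}$, which places us in the setting needed to invoke Lemma~\ref{lem:nonzeroPsi}.

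For the remaining generators, those with $k > h(\ell)$ and $v_w(k) > v_w(\ell) + 1$, I would chain three results in sequence. First, Lemma~\ref{lem:f^w0-init}\eqref{lem-part:f^w0-init} identifies $\init_{<_n}(f^{w_0}_{v_w(k), v_w(\ell)}) = -x_{n+1-v_w(k),\, v_w(\ell)+1}$, which is an indeterminate. Next, Lemma~\ref{lem:nonzeroPsi}(1) guarantees that this indeterminate lies outside $D_w$, so that $\psi_w$ sends it to the indeterminate $-z_{n+1-v_w(k),\, v_w^{-1}(v_w(\ell)+1)} \in \mathbb{C}[\mathbf{z}_w]$. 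Finally, Lemma~\ref{lem:compatibleInit} asserts that this equals $\init_{<_n^w}(g^w_{k,\ell})$. Distinctness of these initial terms across different pairs $(k,\ell)$ then follows directly from Lemma~\ref{lem:nonzeroPsi}\eqref{lem-part:psi injective}.

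The genuine technical content has been absorbed into the preceding lemmas, so the main subtlety in the proof itself is the careful handling of the degenerate case $v_w(k) \le v_w(\ell) + 1$: one must use the nonemptiness of the cell, via Lemma~\ref{lem:whenNonEmpty}, to rule out any constant generators equal to $1$. Once this is dispatched, the conclusion follows by chaining the cited lemmas together.
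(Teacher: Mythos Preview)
Your proposal is correct and follows essentially the same approach as the paper: both chain Lemma~\ref{lem:psiImage}, Lemma~\ref{lem:compatibleInit}, and Lemma~\ref{lem:nonzeroPsi} in the same way, with Lemma~\ref{lem:f^w0-init} supplying the initial-term computation on the $w_0$-side. Your handling of the degenerate case $v_w(k) \le v_w(\ell)+1$ is slightly more explicit than the paper's (which dispatches it in the discussion preceding Definition~\ref{def:cellMonomialOrder}), but the substance is identical.
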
 

\begin{proof} 
 Let $\{g^w_{k_1,\ell_1}, \ldots, g^w_{k_r,\ell_r}\}$ be the set of generators of $J_{w,h}$ from Definition \ref{def:HessSchubertCell}.
    By Lemma \ref{lem:psiImage}, this is equal to the set $\{ \psi_w(f^{w_0}_{v_w(k_1),v_w(\ell_1)}), \ldots, \psi_w(f^{w_0}_{v_w(k_r),v_w(\ell_r)}) \}$.
    Then, by Lemma \ref{lem:compatibleInit}, $\init_{<_n^w}(g^w_{k_i,\ell_i}) = \psi_w(\init_{<_n}(f^{w_0}_{v_w(k_i),v_w(\ell_i)}))$ and by Lemma \ref{lem:nonzeroPsi}, the initial terms of $g^w_{k_i,\ell_i}$ with respect to $<_n^w$ are distinct indeterminates, as claimed. 
\end{proof} 

It is now straightforward to see that the generators $\{g_{k,\ell}^w\}$ of $J_{w, h}$ form a Gr\"obner basis with respect to $<_n^w$. 

\begin{theorem} \label{thm:subsetGB}
    For any indecomposable Hessenberg function $h$ and any $w \in {\rm Hess}(\mathsf N, h)^{\mathsf S}$, the generators $g_{k,\ell}^w$ for the regular nilpotent Hessenberg Schubert cell ideal $J_{w,h}$ form a Gr\"obner basis for $J_{w,h}$ with respect to $<_n^w$. Moreover, the initial ideal $\init_{<^w_n}(J_{w,h})$ is an ideal generated by distinct indeterminates. 
\end{theorem}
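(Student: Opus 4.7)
The plan is to deduce Theorem~\ref{thm:subsetGB} almost immediately from Corollary~\ref{cor: init Jwh} via a standard coprimality-of-leading-terms argument (often attributed to Buchberger's product criterion). Recall that Corollary~\ref{cor: init Jwh} already tells us that the initial terms $\init_{<^w_n}(g^w_{k,\ell})$, as $(k,\ell)$ ranges over pairs with $k > h(\ell)$, are \emph{distinct indeterminates} in $\mathbb{C}[\mathbf{z}_w]$ (up to sign). So the main work is already essentially done; what remains is to promote ``distinct initial indeterminates'' into a Gr\"obner basis statement.

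First, I would invoke Buchberger's criterion: a finite generating set $\{g_1, \ldots, g_r\}$ of an ideal is a Gr\"obner basis with respect to a given monomial order if and only if every S-polynomial $S(g_i, g_j)$ reduces to zero modulo $\{g_1,\ldots,g_r\}$. The key observation is the product (or coprimality) criterion: whenever $\init(g_i)$ and $\init(g_j)$ are coprime as monomials, the S-polynomial $S(g_i, g_j)$ automatically reduces to zero. Since in our setting the initial terms are pairwise distinct indeterminates (up to sign), any two of them are coprime monomials, so every S-pair $S(g^w_{k,\ell}, g^w_{k',\ell'})$ with $(k,\ell) \neq (k',\ell')$ reduces to zero. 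Hence the generators form a Gr\"obner basis for $J_{w,h}$ with respect to $<^w_n$.

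For the second assertion, once we know $\{g^w_{k,\ell}\}$ is a Gr\"obner basis, the initial ideal $\init_{<^w_n}(J_{w,h})$ is by definition generated by the initial terms $\init_{<^w_n}(g^w_{k,\ell})$. By Corollary~\ref{cor: init Jwh} these initial terms are distinct indeterminates (up to a sign, which is a unit and can be absorbed), so $\init_{<^w_n}(J_{w,h})$ is generated by a set of distinct indeterminates, as claimed.

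I do not anticipate any serious obstacle: the whole theorem is a clean corollary of the initial-term analysis carried out in Lemma~\ref{lem:nonzeroPsi} and Lemma~\ref{lem:compatibleInit}, packaged into Corollary~\ref{cor: init Jwh}. The only thing to watch is the trivial sign issue (the initial terms are of the form $\pm z_{a,b}$ rather than $z_{a,b}$), which does not affect coprimality of the underlying monomials nor the statement about the initial ideal, since $-1$ is a unit in $\mathbb{C}$.
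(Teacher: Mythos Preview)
Your proposal is correct and follows essentially the same approach as the paper: both invoke Corollary~\ref{cor: init Jwh} to see that the initial terms are distinct indeterminates, and then use Buchberger's criterion (via coprimality of leading terms) to conclude that the $g^w_{k,\ell}$ form a Gr\"obner basis, with the initial-ideal statement following immediately. The only cosmetic difference is that the paper writes out the explicit form of the $S$-polynomial rather than citing the product criterion by name.
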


\begin{proof}
    We saw in Corollary~\ref{cor: init Jwh} that the initial terms of the generators are distinct indeterminates. 
    This implies that the $S$-polynomial for any pair of generators $g^w_{k,\ell}$ and $g^w_{k',\ell'}$ is given by $S(g^w_{k,\ell}, g^w_{k',\ell'}) = \init_{<_n^w}(g^w_{k',\ell'})g^w_{k,\ell} - \init_{<_n^w}(g^w_{k,\ell})g^w_{k',\ell'}$.
    Buchberger's Criterion \cite[Chapter 2.6, Theorem 6]{CLO} immediately implies that the generators $g^w_{k,\ell}$ form a Gr\"obner basis with respect to $<_n^w$. Since we saw in Corollary~\ref{cor: init Jwh} that the initial terms of the $g^w_{k,\ell}$ are distinct indeterminates, and since these form a Gr\"obner basis, the initial ideal is an ideal generated by distinct indeterminates as claimed. 
\end{proof}

We now provide a sketch of an alternate proof using the fact that the image of a Gr\"obner basis is again a Gr\"obner basis under a specialization map if the initial terms appearing do not map to zero.

\begin{proof}[Alternate proof sketch of Theorem \ref{thm:subsetGB}]
    Suppose that $\{g^w_{k_1,\ell_1}, \ldots, g^w_{k_r,\ell_r}\}$ are the generators for $J_{w,h}$ from Definition \ref{def:HessSchubertCell}.
    By Lemma \ref{lem:psiImage}, each $g^w_{k,\ell}$ is the image of a certain $f^{w_0}_{a,b}$ under the ring homomorphism $\psi_w$.
    Furthermore by Lemma \ref{lem:nonzeroPsi}, the initial terms of these $f^{w_0}_{a,b}$ with respect to $<_n$ are not mapped to $0$ by $\psi_w$. Let us define a new lexicographic monomial order $\prec^{w_0,w}_n$ on $\mathbb{C}[\mathbf x_{w_0}]$ which respects the ordering $<_n$ except for all $x_{i,j}\in D_w$ which receive the least weight.
    That is, if $x_{i,j} \in D_w$ and $x_{i',j'} \notin D_w$, then $x_{i,j} \prec_n^{w_0,w} x_{i',j'}$, and if $x_{i,j}, x_{i',j'} \in D_w$ or $x_{i,j}, x_{i',j'} \notin D_w$, then $x_{i,j} \prec_n^{w_0,w} x_{i',j'}$ if $x_{i,j} <_n x_{i',j'}$.
    
    Then $\{f^{w_0}_{v_w(k_1),v_w(\ell_1)}, \ldots, f^{w_0}_{v_w(k_r),v_w(\ell_r)} \}$ is a Gr\"obner basis for the ideal it generates with respect to $\prec^{w_0,w}_n$, since the initial terms of the $f^{w_0}_{a,b}$ with respect to $<_n$ are distinct indeterminates not in $D_w$. Then per \cite[Definition 4]{Gianni}, $\prec^{w_0,w}_n$ is a $\psi_w$-admissible order, so the result of \cite[Theorem 1]{Gianni}, together with Lemmas \ref{lem:nonzeroPsi} and \ref{lem:compatibleInit}, completes the proof.
    (Technically, $\psi_w$ is a composition of a specialization map with a change of variable names, but this is a small adjustment to the statement of \cite[Theorem 1]{Gianni}.) 
\end{proof}

From the above results, it follows immediately that the regular nilpotent Hessenberg Schubert cell ideal $J_{w, h}$ is radical and hence is  the defining ideal of ${\rm Hess}(\mathsf N, h) \cap X^w_\circ$.

\begin{corollary} \label{cor:radical cell ideal}
Let $w \in \mathrm{Hess}(\mathsf{N},h)^{\mathsf{S}}$.     
Then the ideal $J_{w, h}$ is radical.
\end{corollary}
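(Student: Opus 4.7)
The plan is to leverage Theorem~\ref{thm:subsetGB} in the cleanest possible way. That theorem establishes much more than a Gr\"obner basis for $J_{w,h}$: it shows that the initial ideal $\init_{<^w_n}(J_{w,h})$ is generated by a collection of \emph{distinct indeterminates} in $\mathbb{C}[\mathbf{z}_w]$. In particular, the initial ideal is generated by a subset of the variables of the polynomial ring, so the quotient $\mathbb{C}[\mathbf{z}_w]/\init_{<^w_n}(J_{w,h})$ is itself (isomorphic to) a polynomial ring in the remaining indeterminates. Hence $\init_{<^w_n}(J_{w,h})$ is not merely radical but in fact prime.

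Given this, the plan is to invoke the standard Gr\"obner-theoretic principle: if $\init_<(I)$ is a radical ideal for some monomial order $<$, then $I$ is itself radical. (Conceptually, this is a consequence of the flat degeneration from $R/I$ to $R/\init_<(I)$, under which reducedness of the special fiber propagates to reducedness of the generic fiber.) A convenient reference is a standard source on Gr\"obner bases, e.g., Eisenbud's \emph{Commutative Algebra with a View Toward Algebraic Geometry}, or the monograph of Herzog--Hibi on monomial ideals.

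Applying this principle with $I = J_{w,h}$ and $< \, = \, <^w_n$ finishes the proof: Theorem~\ref{thm:subsetGB} provides the hypothesis that $\init_{<^w_n}(J_{w,h})$ is radical, and the cited lifting result delivers the conclusion. The main (and only) obstacle is thus a matter of pinning down a clean citation for the radical-lifting fact; there is no substantive computational or geometric content required beyond what Theorem~\ref{thm:subsetGB} already supplies.
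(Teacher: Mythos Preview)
Your proposal is correct and follows essentially the same approach as the paper's own proof: the paper observes that $\init_{<_n^w}(J_{w,h})$ is an ideal of indeterminates (hence radical) by Theorem~\ref{thm:subsetGB}, and then cites the standard fact (they use \cite[Exercise 4.2.16]{CLO}) that radicality of the initial ideal implies radicality of $J_{w,h}$. Your additional remark that the initial ideal is in fact prime is correct but not needed.
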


\begin{proof}
Since $\init_{<_n^w}(J_{w, h})$ is an ideal of indeterminates, it is a radical ideal. Thus, so too is $J_{w,h}$ (e.g., see \cite[Exercise 4.2.16]{CLO}).
\end{proof}

\section{Applications of the Main Results} \label{sec:applications}

In this section, we discuss some consequences of our main results. The results of the previous sections imply that the regular nilpotent Hessenberg Schubert cells are affine spaces, as we show in Proposition~\ref{cor:PavingByAffines}. This allows us to obtain several immediate applications of our main results which connect to existing literature and illustrate the usefulness of our computational-algebraic perspective. 

The content of this section is as follows. First, in Section~\ref{subsec:complete intersections} we see that the local defining ideals of regular nilpotent Hessenberg Schubert cells are complete intersections. Second, in Section~\ref{sec: affine paving} we show -- as advertised above -- that the regular nilpotent Hessenberg varieties $\mathrm{Hess}(\mathsf{N},h)$ are paved by affine spaces. Third, in Section~\ref{subsec: hilbert series} we compute the Hilbert series of the ideals of $\mathrm{Hess}(\mathsf{N},h) \cap X^w_\circ$. The content of Section~\ref{subsec: applications GVD} is to show that these cell ideals $J_{w,h}$ are geometrically vertex decomposable. Finally, in Section~\ref{subsec: applications Frobenius} we state and prove some results on Frobenius splitting related to regular nilpotent Hessenberg Schubert cells.

\subsection{Complete intersections} \label{subsec:complete intersections}

As an immediate corollary of the results in Section \ref{sec:GB-HessSchubertCells}, we see that, in analogy with the regular nilpotent Hessenberg patch ideal at the $w_0$-chart (as seen in \cite{DSH}), regular nilpotent Hessenberg Schubert cell ideals are also complete intersection ideals. We begin with the following. 

\begin{lemma} \label{lem:triangular_order}
Let $\{g^w_{k_1,\ell_1}, \ldots, g^w_{k_r,\ell_r}\}$ be the set of nonzero generators of $J_{w,h}$ from Definition \ref{def:HessSchubertCell}, ordered so that $\init_{<_n^w}(g^w_{k_1,\ell_1}) >_n^w \ldots  >_n^w \init_{<_n^w}(g^w_{k_r,\ell_r})$. Then $\init_{<_n^w}(g^w_{k_i,\ell_i})$ does not divide any term of $g^w_{k_j,\ell_j}$ for any
$j>i$.
\end{lemma}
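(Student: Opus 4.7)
The plan is to leverage the correspondence $g^w_{k,\ell} = \psi_w(f^{w_0}_{v_w(k), v_w(\ell)})$ from Lemma~\ref{lem:psiImage}, together with the analogous triangular property for the $w_0$-patch polynomials recorded in Lemma~\ref{lem:f^w0-init}\eqref{lem-part:f^w0-init-seq}, by transporting the latter across the ring homomorphism $\psi_w$. The key compatibility between the monomial orders is provided by Lemma~\ref{lem:compatibleInit}, and inspection of Definitions~\ref{def:psi},~\ref{def:w0MonomialOrder}, and~\ref{def:cellMonomialOrder} shows that $\psi_w$ is in fact order-preserving in both directions when restricted to indeterminates outside $D_w$.

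First I would verify that the sequence \eqref{seq:f^w0s} lists the polynomials $f^{w_0}_{k,\ell}$ (with $k > \ell+1$) precisely in order of descending $<_n$-initial terms; this is immediate from the explicit formula $\init_{<_n}(f^{w_0}_{k,\ell}) = -x_{n+1-k,\ell+1}$ of Lemma~\ref{lem:f^w0-init}\eqref{lem-part:f^w0-init} together with Definition~\ref{def:w0MonomialOrder}. Combined with the two-sided order-preservation of $\psi_w$ on non-$D_w$ indeterminates, this means that ordering the nonzero $g^w_{k_i,\ell_i}$ by descending $<_n^w$-initial terms induces on the corresponding polynomials $f^{w_0}_{v_w(k_i),v_w(\ell_i)}$ exactly the subsequence order inherited from \eqref{seq:f^w0s}.

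Then I would invoke Lemma~\ref{lem:f^w0-init}\eqref{lem-part:f^w0-init-seq}: for each $i$, the indeterminate $\init_{<_n}(f^{w_0}_{v_w(k_i),v_w(\ell_i)}) = -x_{n+1-v_w(k_i),\, v_w(\ell_i)+1}$ does not appear in $f^{w_0}_{v_w(k_j),v_w(\ell_j)}$ for any $j>i$. To transport this non-appearance statement across $\psi_w$, I would use the injectivity of $\psi_w$ on indeterminates outside $D_w$, established in Lemma~\ref{lem:nonzeroPsi}\eqref{lem-part:psi injective}: if $\init_{<_n^w}(g^w_{k_i,\ell_i}) = \psi_w(\init_{<_n}(f^{w_0}_{v_w(k_i),v_w(\ell_i)}))$ were to divide some term of $g^w_{k_j,\ell_j} = \psi_w(f^{w_0}_{v_w(k_j),v_w(\ell_j)})$, then its unique preimage among the non-$D_w$ variables would necessarily appear in $f^{w_0}_{v_w(k_j),v_w(\ell_j)}$, contradicting Lemma~\ref{lem:f^w0-init}\eqref{lem-part:f^w0-init-seq}.

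The main technical point I expect to require care is pinning down the two-sided order-preservation of $\psi_w$ on indeterminates outside $D_w$, since this fact is only implicit in the proof of Lemma~\ref{lem:compatibleInit}; it is precisely what is needed to match the descending $<_n^w$ ordering of the $g^w$-initial terms with the subsequence order of the $f^{w_0}$'s appearing in \eqref{seq:f^w0s}. Once this is in place, the remainder of the argument is a routine transport of the triangular structure of the $w_0$-patch generators to the Schubert cell setting via $\psi_w$.
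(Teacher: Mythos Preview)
Your proposal is correct and follows essentially the same route as the paper: identify the $g^w_{k_i,\ell_i}$ with $\psi_w(f^{w_0}_{v_w(k_i),v_w(\ell_i)})$ via Lemma~\ref{lem:psiImage}, translate the descending $<_n^w$-ordering on initial terms of the $g$'s to the descending $<_n$-ordering on initial terms of the corresponding $f^{w_0}$'s, invoke \cite[Lemma 4.13]{DSH} (recorded here as Lemma~\ref{lem:f^w0-init}) for the triangular property on the $w_0$-side, and then push back across $\psi_w$. Your write-up is in fact more explicit than the paper's on two points the paper leaves implicit: the check that the sequence~\eqref{seq:f^w0s} is precisely the descending $<_n$-initial-term order (so that Lemma~\ref{lem:f^w0-init}\eqref{lem-part:f^w0-init-seq} applies to the relevant subsequence), and the use of injectivity of $\psi_w$ on $\mathbf{x}_{w_0}\setminus D_w$ to justify that non-divisibility survives the transport. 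The ``two-sided order-preservation'' you flag as the main technical point is exactly what the paper uses without comment; it follows immediately once you note that $\psi_w$ restricted to non-$D_w$ variables is a bijection and that the forward direction (proved inside Lemma~\ref{lem:compatibleInit}) of a bijection between total orders forces the reverse direction as well.
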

\begin{proof}
By Lemma \ref{lem:psiImage}, the ordered set of generators $\{g^w_{k_1,\ell_1}, \ldots, g^w_{k_r,\ell_r}\}$ is equal to the ordered set $\{ \psi_w(f^{w_0}_{v_w(k_1),v_w(\ell_1)}), \ldots, \psi_w(f^{w_0}_{v_w(k_r),v_w(\ell_r)}) \}$.  
Furthermore, we have that \[\init_{<_n}(f^{w_0}_{v_w(k_1),v_w(\ell_1)}) >_n \ldots  >_n \init_{<_n}(f^{w_0}_{v_w(k_r),v_w(\ell_r)})\]
and so $\init_{<_n}(f^{w_0}_{v_w(k_i),v_w(\ell_i)})$ does not divide any term of $f^{w_0}_{v_w(k_j),v_w(\ell_j)}$ for $j>i$ by \cite[Lemma 4.13]{DSH}. 
Applying $\psi_w$, we see that $\init_{<_n^w}(g^w_{k_i,\ell_i})$ does not divide any term of $g^w_{k_j,\ell_j}$ for 
any
$j>i$, 
completing the proof.
\end{proof} 

Before stating the main result of this subsection we include some notation for the height of each Hessenberg Schubert cell ideal.
As discussed at the beginning of Section \ref{sec: Grobner for Hess Schubert cells}, the nonzero generators for the Hessenberg Schubert cell ideal $J_{w,h}$ are the $g^w_{k, \ell}$ that satisfy both $k > h(\ell)$ and $v_w(k) > v_w(\ell) + 1$.
We denote by $\Lambda_{w,h}$ the number of nonzero generators $g^w_{k,\ell}$ for the ideal $J_{w,h}$.

We now recall from \cite{DSH} the definition of a triangular complete intersection and then state the main result of this subsection.

\begin{definition}[{\cite[Definition 3.3]{DSH}}]
    A complete intersection ideal $I \subseteq \mathbb C[x_1, \ldots, x_n]$ of height $r$ is called a {\bf triangular complete intersection} of height $r$ with respect to a monomial order $<$ if there exists an ordered set of generators $f_1, \ldots, f_r$ of $I$ satisfying:
    \begin{enumerate}
        \item the initial terms $\init_<(f_i)$ are unique indeterminates up to multiplication by units;

        \item $\init_<(f_i)$ does not divide any term of $f_j$ for any $j > i$.
    \end{enumerate}
\end{definition}

\begin{theorem} \label{thm: complete intersection}
For any indecomposable Hessenberg function $h$ and any $w \in {\rm Hess}(\mathsf N, h)^{\mathsf S}$, the regular nilpotent Hessenberg Schubert cell ideal $J_{w,h}$ is a complete intersection of  height $\Lambda_{w,h}$ and the nonzero $g_{k,\ell}^w$ defining $J_{w,h}$ form a minimal generating set. Furthermore, $J_{w,h}$ is a triangular complete intersection. 
\end{theorem}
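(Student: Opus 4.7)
The plan is to leverage the Gr\"obner basis structure established in Theorem~\ref{thm:subsetGB} together with the triangular ordering from Lemma~\ref{lem:triangular_order}. My overall strategy is to compute the height of $J_{w,h}$ by passing to its initial ideal, which has a particularly transparent form, and then to use the match between this height and the number of generators to conclude that $J_{w,h}$ is a minimal complete intersection.

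As a first step, I would observe that by Corollary~\ref{cor: init Jwh} together with Theorem~\ref{thm:subsetGB}, the initial ideal $\init_{<_n^w}(J_{w,h})$ is generated by $\Lambda_{w,h}$ distinct indeterminates of $\mathbb{C}[\mathbf{z}_w]$. Such a monomial ideal is prime, with quotient isomorphic to a polynomial ring in the remaining indeterminates, and hence has height equal to $\Lambda_{w,h}$. Since passing from an ideal to its initial ideal preserves Hilbert function and therefore Krull dimension of the quotient (a standard consequence of Gr\"obner basis theory, see e.g.~\cite{CLO}), the height is also preserved, and so $\mathrm{ht}(J_{w,h}) = \Lambda_{w,h}$.

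Next, since $J_{w,h}$ is generated by exactly $\Lambda_{w,h}$ elements (namely, the nonzero $g^w_{k,\ell}$) and has height $\Lambda_{w,h}$, it is by definition a complete intersection of the claimed height. Minimality of this generating set then follows from Krull's height theorem: no ideal of height $\Lambda_{w,h}$ in a Noetherian ring can be generated by fewer than $\Lambda_{w,h}$ elements, so the given generating set must be minimal.

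Finally, that $J_{w,h}$ is a triangular complete intersection requires essentially no new work: condition (1) of the definition is precisely Corollary~\ref{cor: init Jwh} (the initial terms of the nonzero $g^w_{k,\ell}$ are distinct indeterminates, hence unique up to sign), and condition (2) is exactly the content of Lemma~\ref{lem:triangular_order}. The main obstacle, insofar as there is one, is only the bookkeeping of citing the standard fact that height is preserved under passage to an initial ideal; once that is in hand, the proof is a direct assembly of results already established in the previous section.
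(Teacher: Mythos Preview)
Your proof is correct. The paper takes a somewhat different route to the same conclusion: rather than computing the height via Gr\"obner degeneration and preservation of the Hilbert function, the paper first invokes a general result from \cite[Corollary 19.3.8]{EGA} (an ideal whose initial ideal is a complete intersection is itself a complete intersection), and then establishes the height by explicitly showing that the quotient $\mathbb{C}[\mathbf{z}_w]/J_{w,h}$ is isomorphic to a polynomial ring. Concretely, after reducing the Gr\"obner basis, each generator has the form $z_i - q_i$ with $q_i$ in the remaining variables, so the quotient is a polynomial ring and $J_{w,h}$ is prime of the claimed height. The paper also derives minimality directly from Lemma~\ref{lem:triangular_order} rather than from Krull's height theorem. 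Your approach is more self-contained and avoids the EGA citation; the paper's approach has the side benefit of explicitly exhibiting the quotient as a polynomial ring, which is precisely what is needed for Corollary~\ref{cor:CItoAffine} and the affine paving results in Section~\ref{sec: affine paving}.
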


\begin{proof}
    The result of Theorem \ref{thm:subsetGB} says that $\init_{<_n^w}(J_{w,h})$ is an ideal of indeterminates and hence is a complete intersection. Thus, by \cite[Corollary 19.3.8]{EGA}, $J_{w,h}$ is also a complete intersection. That the nonzero generators $g^w_{k, \ell}$ form a minimal generating set follows from Lemma \ref{lem:triangular_order}.

    To see that $J_{w,h}$ has height $\Lambda_{w, h}$, it suffices to show that $J_{w,h}$ is prime \cite[Theorem 1.8A(b)]{Hartshorne}.
    Relabel the variables in the ring as $z_1, \ldots, z_n$.
    By Lemma \ref{lem:triangular_order} we may reduce the generators $g^w_{k,\ell}$ to a reduced Gr\"obner basis for $J_{w,h}$ with respect to $<_n^w$ as $g_1, \ldots, g_{\Lambda_{w,h}}$ where $\init_{<_n^w}(g_i)$ is an indeterminate that does not divide any term of $g_j$ for any distinct $i$ and $j$.
    By a relabeling of the variables, we can write $g_i = z_i - q_i$ for each $1\leq i\leq \Lambda_{w,h}$ where $q_i$ is a polynomial in the variables $z_{\Lambda_{w,h}+1},\ldots,z_n$. It follows that 
    \[ 
    \mathbb C[\mathbf z_w]/J_{w,h} \cong \mathbb C[q_1, \ldots, q_{\Lambda_{w,h}}, z_{\Lambda_{w,h}+1}, \ldots, z_n] \cong \mathbb C[z_{\Lambda_{w,h}+1}, \ldots, z_n].
    \]
    Hence $J_{w,h}$ is prime and \cite[Theorem 1.8A(b)]{Hartshorne} implies that $J_{w,h}$ has height $\Lambda_{w,h}$, as desired.
    That $J_{w,h}$ is a triangular complete intersection now follows from the present argument, Corollary \ref{cor: init Jwh}, and Lemma \ref{lem:triangular_order}.
\end{proof}

In the above proof, we have also shown the following.

\begin{corollary}  \label{cor:CItoAffine}
    If $I \subseteq \mathbb C[x_1, \ldots, x_n]$ is a triangular complete intersection of height $r$, then $I$ is a prime ideal and $\mathbb V(I) \cong \mathbb A^{n - r}$.
\end{corollary}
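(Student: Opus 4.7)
The plan is to isolate and abstract the second portion of the proof of Theorem \ref{thm: complete intersection}, which used nothing about Hessenberg Schubert cells beyond the triangular complete intersection structure of $J_{w,h}$.

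First I would verify that the ordered generators $f_1, \ldots, f_r$ promised by the definition of a triangular complete intersection in fact form a Gr\"obner basis for $I$ with respect to $<$. This is immediate from Buchberger's criterion, since the leading terms $\init_<(f_i)$ are distinct indeterminates and hence pairwise coprime --- the same reasoning used in the proof of Theorem \ref{thm:subsetGB}. I would then pass to the reduced Gr\"obner basis by standard interreduction; since each $\init_<(f_i)$ is an indeterminate, interreduction preserves the initial ideal $\init_<(I)$, which is therefore generated by $r$ distinct indeterminates. After relabeling, I may assume these indeterminates are $x_1, \ldots, x_r$, so that $\init_<(f_i)$ is a unit multiple of $x_i$.

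Next, I would exploit the reducedness to pin down the form of each $f_i$. By reducedness, no monomial of $f_i$ is divisible by $x_j$ for any $j \in \{1, \ldots, r\} \setminus \{i\}$. Moreover, any monomial of $f_i$ other than its leading term is strictly less than $x_i$ in the monomial order, so it cannot be divisible by $x_i$ either (since $x_i m > x_i$ for every nonconstant monomial $m$ in any monomial order). Thus each $f_i$ takes the form $f_i = c_i x_i - q_i$ with $c_i \in \mathbb C^\times$ and $q_i \in \mathbb C[x_{r+1}, \ldots, x_n]$.

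From this explicit form the conclusion follows at once. The composition
\[
\mathbb C[x_{r+1}, \ldots, x_n] \hookrightarrow \mathbb C[x_1, \ldots, x_n] \twoheadrightarrow \mathbb C[x_1, \ldots, x_n]/I
\]
is surjective (because $x_i \equiv c_i^{-1} q_i \pmod{I}$ for $i \le r$) and injective (because the Gr\"obner division algorithm reduces any polynomial in $\mathbb C[x_{r+1}, \ldots, x_n]$ to itself, as none of the leading terms $x_1, \ldots, x_r$ divides any of its monomials). Hence $\mathbb C[x_1, \ldots, x_n]/I \cong \mathbb C[x_{r+1}, \ldots, x_n]$, which is a polynomial ring, so $I$ is prime and $\mathbb V(I) \cong \mathbb A^{n-r}$. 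The main point requiring some care is the interreduction step, but it poses no genuine obstacle: the fact that the leading terms are distinct indeterminates makes the interreduction well-defined and its effect on the shape of the generators essentially automatic.
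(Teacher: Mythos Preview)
Your proposal is correct and follows essentially the same approach as the paper: the corollary is stated there as an immediate consequence of the argument in the proof of Theorem~\ref{thm: complete intersection}, which likewise passes to a reduced Gr\"obner basis of the form $x_i - q_i$ with $q_i \in \mathbb C[x_{r+1},\ldots,x_n]$ and reads off the isomorphism $\mathbb C[x_1,\ldots,x_n]/I \cong \mathbb C[x_{r+1},\ldots,x_n]$. You have simply made explicit a few details (the Buchberger check, the reason no non-leading monomial of $f_i$ can involve $x_i$, and the injectivity via the division algorithm) that the paper leaves implicit.
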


\subsection{Affine pavings} \label{sec: affine paving}

An \textbf{affine paving} of an algebraic variety $X$ is an ordered partition of $X$ into disjoint $X_0, X_1, X_2,\dots$, such that
\begin{itemize}
    \item each $X_i$ is homeomorphic to an affine space; and
    \item each of the finite unions $\cup_{i=0}^r X_i$ is a Zariski closed subset of $X$.
\end{itemize}
Tymoczko proved in \cite{Tymoczko} that regular nilpotent Hessenberg varieties in the classical Lie type are paved by affine spaces, using delicate combinatorial and Lie-theoretic arguments. 
In this subsection, we provide a quick alternate proof of this result in Lie type $A$.

Combining Corollary \ref{cor:CItoAffine} with Theorem \ref{thm: complete intersection} immediately yields the following result.
Recall from the previous subsection that $\Lambda_{w,h}$ denotes the height of the regular nilpotent Hessenberg Schubert cell ideal $J_{w, h}$.

\begin{proposition} \label{cor:PavingByAffines}
    Let $h$ be an indecomposable Hessenberg function and $w\in \mathrm{Hess}(\mathsf N, h)^{\mathsf S}$. Let $r_w$ denote the dimension of the Schubert cell $X^w_\circ$. Then, the Hessenberg Schubert cell $\mathrm{Hess}(\mathsf N, h) \cap X_\circ^w$ is isomorphic to the affine space $\mathbb{A}^{r_w-\Lambda_{w,h}}$.
\end{proposition}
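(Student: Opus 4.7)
The plan is to assemble the proof directly from results already established, since essentially all the real work has been done. First I would recall that, by the parametrization in~\eqref{eqn:Omega_ij}, the Schubert cell $X^w_\circ$ is itself isomorphic to the affine space $\mathbb{A}^{r_w}$ where $r_w = \ell(w) = |\mathbf{z}_w|$, with coordinate ring the polynomial ring $\mathbb{C}[\mathbf{z}_w]$ in $r_w$ indeterminates. By Corollary~\ref{cor:radical cell ideal}, the ideal $J_{w,h}\subseteq \mathbb{C}[\mathbf{z}_w]$ is the (radical) defining ideal of the subvariety $\mathrm{Hess}(\mathsf{N},h)\cap X^w_\circ$ of $X^w_\circ$. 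Thus it suffices to show that $\mathbb{V}(J_{w,h}) \cong \mathbb{A}^{r_w - \Lambda_{w,h}}$.

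Next, I would invoke Theorem~\ref{thm: complete intersection}, which asserts that $J_{w,h}$ is a triangular complete intersection of height $\Lambda_{w,h}$ with respect to $<_n^w$. Applying Corollary~\ref{cor:CItoAffine} to this ideal in the polynomial ring on $r_w$ indeterminates immediately yields $\mathbb{V}(J_{w,h})\cong \mathbb{A}^{r_w - \Lambda_{w,h}}$, and combined with the identification of $\mathrm{Hess}(\mathsf{N},h)\cap X^w_\circ$ with $\mathbb{V}(J_{w,h})$ inside $X^w_\circ\cong \mathbb{A}^{r_w}$, this gives the claim.

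There is no real obstacle at this stage: all the substantive content --- that the generators $g^w_{k,\ell}$ form a Gr\"obner basis with indeterminate initial terms (Theorem~\ref{thm:subsetGB}), that the resulting ideal is a triangular complete intersection (Theorem~\ref{thm: complete intersection}), and the abstract fact that such an ideal cuts out an affine space of the expected codimension (Corollary~\ref{cor:CItoAffine}) --- has already been proven. The only thing one might want to double-check is the bookkeeping that the ambient polynomial ring in Corollary~\ref{cor:CItoAffine} has exactly $r_w$ variables and that $\Lambda_{w,h}$ agrees with the height computed there, but both are immediate from the definitions in Section~\ref{sec:GB-HessSchubertCells} and Section~\ref{subsec:complete intersections}.
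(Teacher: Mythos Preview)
Your proposal is correct and follows essentially the same approach as the paper, which simply states that the result follows by combining Theorem~\ref{thm: complete intersection} with Corollary~\ref{cor:CItoAffine}. Your additional bookkeeping (identifying $X^w_\circ \cong \mathbb{A}^{r_w}$ via~\eqref{eqn:Omega_ij} and invoking Corollary~\ref{cor:radical cell ideal}) just makes explicit what the paper leaves implicit.
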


Next recall that the flag variety $GL_n(\mathbb{C})/B$ has an affine paving. The disjoint sets $X_0, X_1, X_2,\dots$ in the paving are the Schubert cells and they are ordered by any total order which refines Bruhat order. We will now see that this affine paving induces an affine paving on each regular nilpotent Hessenberg variety. 

\begin{theorem}
Let $h$ be an indecomposable Hessenberg function. Then ${\rm Hess}(\mathsf N, h)\subseteq GL_n(\mathbb{C})/B$ has an affine paving by the set of all ${\rm Hess}(\mathsf N, h)\cap X^w_\circ$, $w\in {\rm Hess}(\mathsf N, h)^{\mathsf S}$, totally ordered by any order which refines Bruhat order.
\end{theorem}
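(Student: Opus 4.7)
The plan is to verify the two defining properties of an affine paving directly, leveraging what has already been established in this section together with the Bruhat decomposition. The disjoint covering of $\mathrm{Hess}(\mathsf{N},h)$ by the pieces $\mathrm{Hess}(\mathsf{N},h) \cap X^w_\circ$ for $w \in S_n$ is inherited from $GL_n(\mathbb{C})/B = \bigsqcup_{w\in S_n} X^w_\circ$, and Lemma~\ref{lem:whenNonEmpty} then tells us that exactly the pieces indexed by $w \in \mathrm{Hess}(\mathsf{N},h)^{\mathsf{S}}$ are non-empty. Proposition~\ref{cor:PavingByAffines} immediately yields that each non-empty piece is isomorphic to an affine space, settling one of the two conditions.

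The substantive step is the Zariski-closure condition on initial unions. Fix a total order $\preceq$ on $\mathrm{Hess}(\mathsf{N},h)^{\mathsf{S}}$ refining Bruhat order, and let $w_1, w_2, \ldots$ be the induced enumeration. For an initial segment of length $r$, I would show
\[
\bigcup_{i=1}^r \bigl(\mathrm{Hess}(\mathsf{N},h) \cap X^{w_i}_\circ\bigr) \;=\; \mathrm{Hess}(\mathsf{N},h) \cap \bigcup_{i=1}^r \overline{X^{w_i}_\circ},
\]
where the bar denotes Zariski closure in $GL_n(\mathbb{C})/B$. The right-hand side is closed in $\mathrm{Hess}(\mathsf{N},h)$ as a finite union of Schubert varieties intersected with $\mathrm{Hess}(\mathsf{N},h)$, so this equality suffices. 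The $\subseteq$ inclusion is immediate. For $\supseteq$, I would invoke the standard decomposition $\overline{X^{w_i}_\circ} = \bigsqcup_{v \leq w_i} X^v_\circ$ of a Schubert variety, where $\leq$ is Bruhat order.

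The key subtlety — and, I expect, the only real obstacle — is that the refinement property of $\preceq$ only applies to permutations in $\mathrm{Hess}(\mathsf{N},h)^{\mathsf{S}}$, whereas $\overline{X^{w_i}_\circ}$ a priori contains Schubert cells $X^v_\circ$ with $v \notin \mathrm{Hess}(\mathsf{N},h)^{\mathsf{S}}$. This is resolved by Lemma~\ref{lem:whenNonEmpty}: any point of the right-hand side above that lies in some $X^v_\circ$ with $v \leq w_i$ witnesses that $\mathrm{Hess}(\mathsf{N},h) \cap X^v_\circ \neq \emptyset$, so $v \in \mathrm{Hess}(\mathsf{N},h)^{\mathsf{S}}$; then the refinement $\preceq$ does apply to $v$ and $w_i$, giving $v = w_j$ for some $j \leq r$ and placing the point on the left-hand side. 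With these three ingredients assembled, both conditions in the definition of an affine paving are verified.
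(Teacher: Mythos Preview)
Your proposal is correct and follows essentially the same approach as the paper: both invoke Lemma~\ref{lem:whenNonEmpty} to identify the non-empty pieces, Proposition~\ref{cor:PavingByAffines} for affineness, and the Bruhat decomposition of the flag variety for the closure condition. The paper's proof is terser---it simply notes that the Schubert cells form an affine paving of $GL_n(\mathbb{C})/B$ and that $\mathrm{Hess}(\mathsf{N},h)$ is closed---whereas you spell out the closure argument explicitly, including the point that any $v \leq w_i$ with $\mathrm{Hess}(\mathsf{N},h) \cap X^v_\circ \neq \emptyset$ must already lie in $\mathrm{Hess}(\mathsf{N},h)^{\mathsf S}$ and hence in the initial segment; this is exactly the content hidden in the paper's one-line appeal.
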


\begin{proof} 
By Lemma \ref{lem:whenNonEmpty}, $\text{Hess}(\mathsf N,h)\cap X^w_\circ$ is non-empty if and only if $w\in \text{Hess}(\mathsf N,h)^{\mathsf S}$. 
By Proposition \ref{cor:PavingByAffines}, each  non-empty $\text{Hess}(\mathsf N,h)\cap X^w_\circ$ is isomorphic to affine space. 
The result now follows by noting that the set of all Schubert cells form an affine paving of $GL_n(\mathbb{C})/B$ and the Hessenberg variety $\text{Hess}(\mathsf N,h)$ is a closed subvariety of $GL_n(\mathbb{C})/B$.
\end{proof}

\subsection{Hilbert series} \label{subsec: hilbert series}

In this section, we compute the Hilbert series of regular nilpotent Hessenberg Schubert cell ideals. The computation is a straightforward application of Theorem \ref{thm: complete intersection}, together with well-known techniques regarding Hilbert series of complete intersections and initial ideals. We start by quickly reviewing this background on Hilbert series.

Recall that the Hilbert series of a positively $\mathbb{Z}$-graded ring $R = \mathbb{C}\oplus R_1\oplus R_2\oplus \ldots$ is the series
\[ H_R(t) = \sum_{k=0}^\infty \dim_\mathbb{C}(R_k)t^k.\]
The following results are well-known but are stated here for completeness.

\begin{lemma}\cite[Theorem 15.26 and Exercise 21.17]{Eisenbud} \label{lem: Hilbert Series}
Suppose that $I$ is a homogeneous ideal of $R=\mathbb{C}[x_1,\ldots,x_n]$ with respect to a positive $\mathbb{Z}$-grading on $R$. Let $<$ be a monomial order on $R$. Then
\begin{enumerate}
    \item $H_{R/I}(t) = H_{R/\init_<(I)}(t)$.
    \item Let $I=\langle f_1,\ldots,f_k\rangle$ be a complete intersection of height $k$ where each $f_i$ is homogeneous and $\mathrm{deg}(f_i)=d_i$. Then \[ H_{R/I}(t) = H_R(t)\prod_{i=1}^k(1-t^{d_i}).\]
\end{enumerate}
\end{lemma}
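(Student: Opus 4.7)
My plan is to prove both parts using standard techniques from commutative algebra; since the lemma is already attributed to \cite{Eisenbud}, a fully self-contained proof is not strictly necessary, but I sketch the standard arguments.

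For part (1), the first thing to observe is that $\init_<(I)$ is itself homogeneous with respect to the given positive $\mathbb{Z}$-grading: because $I$ is homogeneous it is generated by its homogeneous components, and the initial term of a homogeneous polynomial is a monomial of the same degree, so $\init_<(I)$ is a monomial ideal generated in the same degrees. The core argument is then Macaulay's basis theorem, which says that the set of \emph{standard monomials} (those not lying in $\init_<(I)$) descends to a $\mathbb{C}$-basis of $R/I$, while by definition it is also a basis of $R/\init_<(I)$. Since both ideals are homogeneous, this identification of bases respects the grading, and the two Hilbert series agree term-by-term.

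For part (2), I would use the Koszul complex on $f_1, \ldots, f_k$. The polynomial ring $R$ is Cohen--Macaulay, and so a homogeneous ideal of height $k$ generated by $k$ elements is automatically generated by a regular sequence. Consequently the Koszul complex $K_\bullet(f_1, \ldots, f_k;R)$ is an acyclic graded free resolution of $R/I$ of the form
\[
0 \to R\bigl(-\textstyle\sum_{i=1}^k d_i\bigr) \to \cdots \to \bigoplus_{i=1}^k R(-d_i) \to R \to R/I \to 0,
\]
with the $p$-th term equal to $\bigoplus_{S \subseteq [k],\, |S|=p} R\bigl(-\sum_{i \in S} d_i\bigr)$. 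Taking the alternating sum of Hilbert series across this exact sequence then yields exactly
\[
H_{R/I}(t) \;=\; H_R(t) \prod_{i=1}^k \bigl(1 - t^{d_i}\bigr).
\]
The only subtle point is the verification that $f_1, \ldots, f_k$ forms a regular sequence, but in this graded Cohen--Macaulay setting this is immediate from the height hypothesis; everything else is routine bookkeeping with graded resolutions. I do not anticipate any genuine obstacle beyond choosing to cite the standard reference rather than rewrite the Koszul argument.
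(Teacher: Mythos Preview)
Your sketch is correct and follows the standard arguments. Note, however, that the paper does not actually prove this lemma: it is stated as a citation to \cite[Theorem 15.26 and Exercise 21.17]{Eisenbud} and used as a black box, with no proof given. So there is nothing to compare against beyond observing that your Macaulay-basis argument for (1) and Koszul-complex argument for (2) are precisely the standard proofs one finds in the cited reference.
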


Although the generators $g^w_{k, \ell}$ are not homogeneous with respect to the standard grading, we again use the $\psi_w$ map to define a \emph{non-standard} grading on $\mathbb C[\mathbf z_w]$. It will turn out, as we see below, that with respect to this non-standard grading, the $g^w_{k, \ell}$ are homogeneous.

In Section \ref{subsec:S-action} of this manuscript we defined, following \cite{DSH}, a $\mathbb Z$-grading on $\mathbb C[\mathbf x_w]$ by $\deg(x_{i, j}) := w(j) - i$. In the special case $w=w_0$, this becomes a positive $\mathbb Z$-grading on $\mathbb C[\mathbf x_{w_0}]$ given by $\deg(x_{i,j}) := w_0(j)-i$ (see Section \ref{subsec:S-action} and \cite[Lemma 2.18]{DSH}). We now define a $\mathbb Z$-grading on $\mathbb C[\mathbf z_w]$ by using this grading on $\mathbb C[\mathbf x_{w_0}]$ together with the algebra homomorphism $\psi_w$, namely, we define $\deg(z_{i, j}) \coloneqq \deg(\psi_w^{-1}(z_{i, j})) = w_0(v_w(j)) - i$, for all $z_{i, j} \in \mathbf z_w$.
That this is well-defined follows from Lemma \ref{lem:nonzeroPsi}\eqref{lem-part:psi injective}.

\begin{remark} \label{remark: two degrees agree}
    The grading on $\mathbb C[\mathbf z_w]$ given above agrees with the grading that arises from the $\mathsf S$-action on ${\rm Hess}(\mathsf N, h) \cap X^w_\circ$, defined in Section \ref{subsec:S-action}.
    Indeed, for any $z_{i, j} \in \mathbf z_w$, the grading from the $\mathsf S$-action yields $\deg(z_{i, j}) = w(j) - i$.
    By the construction of $v_w$, we know that  $w = w_0v_w$. From this it follows that the two definitions agree.
\end{remark}

\begin{example} 
    We continue Example \ref{eg:motivation-part2} and show that in this case, the two definitions of degree indeed agree, as pointed out in Remark~\ref{remark: two degrees agree}.
    The following table uses the definition arising from the $\mathsf S$-action on ${\rm Hess}(\mathsf N, h) \cap X^w_\circ$.
    \begin{center}
    \begin{tabular}{|c||c|c|c|c|c|} \hline 
        $z_{i, j}$ & $z_{1, 1}$ & $z_{1, 2}$ & $z_{1, 3}$ & $z_{2, 1}$ & $z_{2, 2}$
        \\ \hline 
        $\deg(z_{i, j}) = w(j) - i$ & 2 & 3 & 1 & 1 & 2
        \\ \hline
    \end{tabular} 
    \end{center}
    Meanwhile the following table uses the definition in terms of $\psi_w^{-1}$.
    Recall that $w_0(a) = n+1-a$ for all $a \in [n]$.
    \begin{center}
    \begin{tabular}{|c||c|c|c|c|c|} \hline 
        $z_{i, j}$ & $z_{1, 1}$ & $z_{1, 2}$ & $z_{1, 3}$ & $z_{2, 1}$ & $z_{2, 2}$
        \\ \hline 
        $\psi_w^{-1}(z_{i, j})$ & $x_{1, 2}$ & $x_{1, 1}$ & $x_{1, 3}$ & $x_{2, 2}$ & $x_{2, 1}$
        \\ \hline
        $\deg(z_{i, j}) = w_0(v_w(j)) - i$ & 2 & 3 & 1 & 1 & 2
        \\ \hline
    \end{tabular} 
    \end{center}
    Moreover, notice that the generators $g^w_{k,\ell}$ are homogeneous with respect to this grading.
    Indeed, $g^w_{4,1}$ has degree 1 and $g^w_{4, 2}$ has degree 2.
    \exampleqed
\end{example}

The following lemma shows that it is no accident that the generators $g^w_{k, \ell}$ in the previous example are homogeneous.
Indeed, the generators $f^{w_0}_{k,\ell}$ for the $w_0$-chart are homogeneous with respect to this grading, so our definition of the grading on $\mathbb C[\mathbf z_w]$ via the $\psi_w$ map allows us to easily translate the homogeneity to the ${\rm Hess}(\mathsf N, h)\cap X^w_\circ$ setting.

\begin{lemma} \label{lem:grading}
    Let $h$ be an indecomposable Hessenberg function and $w \in {\rm Hess}(\mathsf N, h)^{\mathsf S}$.
    Then, the above defines a positive $\mathbb Z$-grading on $\mathbb C[\mathbf z_w]$ with respect to which the generators $g^w_{k, \ell}$ for $J_{w, h}$ are homogeneous.
\end{lemma}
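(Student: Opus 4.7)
The plan is to check the two claims—positivity of the grading and homogeneity of the generators $g^w_{k,\ell}$—separately, each as a short calculation that piggy-backs on results already established for the $w_0$-chart.

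For positivity, I would invoke Remark~\ref{remark: two degrees agree}, which identifies our grading with $\deg(z_{i,j}) = w(j) - i$. Since $z_{i,j}$ appears as an entry of $\Omega_w$ only when $i < w(j)$ by the parametrization~\eqref{eqn:Omega_ij} of $X^w_\circ$, the quantity $w(j)-i$ is strictly positive for every $z_{i,j} \in \mathbf{z}_w$, and the grading is positive as claimed.

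For homogeneity, the plan is to transport the analogous fact on the $w_0$-chart through the ring homomorphism $\psi_w$. By \cite[Lemma 2.18]{DSH}, each polynomial $f^{w_0}_{k',\ell'}$ is homogeneous with respect to the positive $\mathbb{Z}$-grading $\deg(x_{i,j}) = w_0(j) - i$ on $\mathbb{C}[\mathbf{x}_{w_0}]$. The key observation is that $\psi_w$ from Definition~\ref{def:psi} is a graded ring homomorphism with respect to our choice of grading on $\mathbb{C}[\mathbf{z}_w]$: on generators, either $\psi_w(x_{i,j}) = 0$ (which is homogeneous of every degree) or $\psi_w(x_{i,j}) = z_{i,v_w^{-1}(j)}$, in which case
\[
\deg\bigl(z_{i,v_w^{-1}(j)}\bigr) \;=\; w_0\bigl(v_w(v_w^{-1}(j))\bigr) - i \;=\; w_0(j) - i \;=\; \deg(x_{i,j}),
\]
so degree is preserved on the nose. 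Combining this with the identity $g^w_{k,\ell} = \psi_w(f^{w_0}_{v_w(k), v_w(\ell)})$ from Lemma~\ref{lem:psiImage}, the homogeneity of $f^{w_0}_{v_w(k), v_w(\ell)}$ passes through $\psi_w$ and yields homogeneity of $g^w_{k,\ell}$.

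I do not anticipate any real obstacle: the lemma amounts to bookkeeping that exploits the fact that the grading on $\mathbb{C}[\mathbf{z}_w]$ was defined precisely so that $\psi_w$ is degree-preserving. The only subtlety worth flagging is that $\deg$ on $\mathbb{C}[\mathbf{z}_w]$ is well-defined because $\psi_w$ restricted to $\mathbb{C}[\mathbf{x}_{w_0} \setminus D_w]$ is injective by Lemma~\ref{lem:nonzeroPsi}\eqref{lem-part:psi injective}, which the paragraph preceding the lemma already notes.
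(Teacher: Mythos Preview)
Your proposal is correct and follows essentially the same approach as the paper: both transport homogeneity of the $f^{w_0}_{k',\ell'}$ through $\psi_w$ via Lemma~\ref{lem:psiImage}, after observing that $\psi_w$ preserves degree on nonzero images. The only cosmetic difference is in the positivity check---the paper deduces it by pulling back from the known positivity on $\mathbb{C}[\mathbf{x}_{w_0}]$ via $\psi_w^{-1}(\mathbf{z}_w) \subseteq \mathbf{x}_{w_0}$, whereas you verify $w(j)-i>0$ directly from the parametrization~\eqref{eqn:Omega_ij}; these are equivalent one-line arguments.
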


\begin{proof}
    It was shown in \cite[Section 2.3]{DSH} that the grading on $\mathbb C[\mathbf x_{w_0}]$ is positive.
    Since $\psi_w^{-1}(\mathbf z_w) = \mathbf x_{w_0} \setminus D_w$ is a subset of $\mathbf x_{w_0}$, it follows that the grading on $\mathbb C[\mathbf z_w]$ is also positive.
    Moreover, it was shown in \cite[Lemma 2.18]{DSH} that generators $f^{w_0}_{k, \ell}$ are homogeneous with respect to the non-standard grading on $\mathbb C[\mathbf x_{w_0}]$.
    We showed in Lemma \ref{lem:psiImage} that $g^w_{k,\ell} = \psi_w(f^{w_0}_{v_w(k), v_w(\ell)})$.
    Since $\psi_w$ is a homomorphism of rings, the image of the monomials in $f^{w_0}_{v_w(k), v_w(\ell)}$ under $\psi_w$ either retain their degree or get mapped to zero.
    In either case, we conclude that $g^w_{k, \ell}$ is homogeneous.
\end{proof}

In particular, from Lemma~\ref{lem:grading} we can conclude that the regular nilpotent Hessenberg Schubert cell ideals $J_{w, h}$ are homogeneous with respect to this non-standard grading.

We may now explicitly compute the Hilbert series of $R/J_{w,h}$ as follows. 

\begin{theorem}
Equip $R=\mathbb{C}[\mathbf{z}_w]$ with the positive $\mathbb{Z}$-grading defined in Section \ref{subsec:complete intersections}. 
Suppose that $h$ is an indecomposable Hessenberg function and $w \in {\rm Hess}(\mathsf N, h)^{\mathsf S}$. 
Then $\init_{<_n^w}(g^w_{k,\ell}) = -z_{n+1-v_w(k),v_w^{-1}(v_w(\ell)+1)}$ has degree $v_w(k) - v_w(\ell) - 1$ for all $k > h(\ell)$. 
Furthermore, the Hilbert series of $R/J_{w,h}$ is 
\[ 
H_{R/J_{w,h}}(t)
= \frac{
\displaystyle
\prod_{\substack{k > h(\ell) \\ v_w(k) > v_w(\ell) + 1}}
(1 - t^{v_w(k) - v_w(\ell) - 1})
}{
\displaystyle
\prod_{\substack{i < w(j) \\ j < w^{-1}(i)}} 
\left( 1 - t^{w(j) - i} \right)
}.
\]
\end{theorem}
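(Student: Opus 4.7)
The plan is to break the proof into two parts: (i) identify the initial term of each nonzero generator and compute its degree with respect to the positive $\mathbb{Z}$-grading, and (ii) apply the standard Hilbert series machinery for complete intersections, passing to the initial ideal, to assemble the formula.

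For the first part, I would combine Lemma \ref{lem:psiImage}, which gives $g^w_{k,\ell} = \psi_w(f^{w_0}_{v_w(k),v_w(\ell)})$, with Lemma \ref{lem:compatibleInit}, which shows that $\psi_w$ respects the monomial orders on the relevant generators. Lemma \ref{lem:f^w0-init}(1) identifies the initial term on the $w_0$-chart as $-x_{n+1-v_w(k),\,v_w(\ell)+1}$, and Definition \ref{def:psi}, together with the non-vanishing check of Lemma \ref{lem:nonzeroPsi}, translates this into $\init_{<_n^w}(g^w_{k,\ell}) = -z_{n+1-v_w(k),\, v_w^{-1}(v_w(\ell)+1)}$. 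To compute its degree, I would use the definition $\deg(z_{i,j}) = w(j) - i$ together with $w = w_0 v_w$: setting $a = n+1-v_w(k)$ and $b = v_w^{-1}(v_w(\ell)+1)$, one has $w(b) = w_0(v_w(\ell)+1) = n - v_w(\ell)$, and hence $\deg(z_{a,b}) = w(b) - a = v_w(k) - v_w(\ell) - 1$, as claimed.

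For the second part, Lemma \ref{lem: Hilbert Series}(1) reduces the computation to $H_{R/\init_{<_n^w}(J_{w,h})}(t)$. By Theorem \ref{thm:subsetGB} the initial ideal is generated by distinct indeterminates, namely those $-z_{n+1-v_w(k),\, v_w^{-1}(v_w(\ell)+1)}$ arising from the nonzero generators $g^w_{k,\ell}$, which are precisely those indexed by $k > h(\ell)$ with $v_w(k) > v_w(\ell)+1$. The cases $v_w(k) \le v_w(\ell)$ give $g^w_{k,\ell}=0$ (from the strictly lower-triangular shape of $(w_0M)^{-1}\mathsf N (w_0M)$), and the case $v_w(k) = v_w(\ell)+1$ would give $g^w_{k,\ell}=1$; the latter is ruled out for $w \in \mathrm{Hess}(\mathsf N,h)^{\mathsf S}$ by Lemma \ref{lem:fixedPoint}, since $v_w(k)=v_w(\ell)+1$ is equivalent to $k = w^{-1}(w(\ell)-1)$, which must then be $\le h(\ell)$. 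The initial ideal is therefore a complete intersection generated by monomials of the degrees computed above, so Lemma \ref{lem: Hilbert Series}(2) produces exactly the numerator. The denominator is the Hilbert series of the graded polynomial ring $R = \mathbb{C}[\mathbf{z}_w]$, which on inspection equals $\prod (1-t^{w(j)-i})^{-1}$ indexed by those $(i,j)$ with $i < w(j)$ and $j < w^{-1}(i)$.

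The main obstacle is not conceptual but notational: carefully tracking the interplay of $w$, $w_0$, and $v_w$ through the change of coordinates $\psi_w$ (especially in the degree calculation), and confirming that the index set $\{(k,\ell) : k > h(\ell),\; v_w(k) > v_w(\ell)+1\}$ is exactly the set parametrizing the nonzero minimal generators of $J_{w,h}$, so that no spurious factors are introduced or dropped in the product.
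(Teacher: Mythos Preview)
Your proposal is correct and follows essentially the same route as the paper: identify the initial term via Lemmas \ref{lem:psiImage}, \ref{lem:compatibleInit}, \ref{lem:f^w0-init}, and \ref{lem:nonzeroPsi}, compute its degree from the grading, and then assemble the Hilbert series using Lemma \ref{lem: Hilbert Series}. The only minor difference is that the paper applies Lemma \ref{lem: Hilbert Series}(2) directly to $J_{w,h}$ (using that it is a homogeneous complete intersection by Theorem \ref{thm: complete intersection} and Lemma \ref{lem:grading}), whereas you first pass to the initial ideal via Lemma \ref{lem: Hilbert Series}(1); both paths yield the same product, and your explicit verification that the case $v_w(k)=v_w(\ell)+1$ cannot occur is a welcome addition.
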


\begin{proof}
Recall that the nonzero generators of $J_{w,h}$ are those $g^w_{k,\ell}$ satisfying $k > h(\ell)$ and $v_w(k) > v_w(\ell) + 1$.
By Lemmas \ref{lem:psiImage} and \ref{lem:compatibleInit}, $\init_{<_n^w}(g^w_{k,\ell}) = \psi_w(\init_{<_n}(f^{w_0}_{v_w(k),v_w(\ell)}))$. 
This initial term was computed in Lemma \ref{lem:f^w0-init}\eqref{lem-part:f^w0-init} as $\init_{<_n}(f^{w_0}_{v_w(k),v_w(\ell)}) = -x_{n+1-v_w(k), v_w(\ell) + 1}$.
Then by Definition \ref{def:psi} of $\psi_w$, \[\init_{<_n^w}(g^w_{k,\ell}) = \psi_w(-x_{n+1-v_w(k), v_w(\ell) + 1}) = -z_{n+1-v_w(k),v_w^{-1}(v_w(\ell)+1)}.\]
By definition of the $\mathbb Z$-grading given in Section \ref{subsec:complete intersections}, we have \[ \deg(-z_{n+1-v_w(k), v_w^{-1}(v_w(\ell)+1)}) = w_0(v_w(\ell) + 1) - (n + 1 - v_w(k)) = v_w(k) - v_w(\ell) - 1, \] 
since $w_0(v_w(\ell) + 1) = n + 1 - (v_w(\ell) + 1)$.
We argued in Lemma \ref{lem:grading} that the $g^w_{k, \ell}$ are homogeneous, so it follows that $\deg(g^w_{k, \ell}) = v_w(k) - v_w(\ell) - 1$ for all $k > h(\ell)$. This proves the first claim. 

Next we compute the Hilbert series of $R/J_{w,h}$. 
Since $\{g^w_{k,\ell} \mid k>h(\ell) \}$ is a Gr\"obner basis for the complete intersection ideal $J_{w,h}$ with respect to $<_n^w$, by Lemma \ref{lem: Hilbert Series}, we conclude 
\[ H_{R/J_{w,h}}(t) = H_{R}(t) \prod_{k > h(\ell)} \left(1 - t^{\deg(g_{k,\ell}^w)} \right). \]
But $H_{R}(t) =\displaystyle\prod_{z_{i,j}\in\mathbf{z}_w}(1-t^{\mu_{i,j}})^{-1}$, where $\mu_{i,j}=\deg(z_{i,j}) = w(j) - i$, so 
\[ 
H_{R/J_{w,h}}(t)
= \frac{
\displaystyle
\prod_{\substack{k > h(\ell) \\ v_w(k) > v_w(\ell) + 1}}
(1 - t^{v_w(k) - v_w(\ell) - 1})
}{
\displaystyle
\prod_{\substack{i < w(j) \\ j < w^{-1}(i)}} 
\left( 1 - t^{w(j) - i} \right)
}.
\]
where the product in the denominator is over those indices for which $z_{i,j} \in \mathbf z_w$, as given in \eqref{eqn:Omega_ij}.
\end{proof}

\begin{remark}
Since $R/J_{w,h}$ is isomorphic to a polynomial ring by the work in Section \ref{sec: affine paving}, we could have also computed the Hilbert series using the weights of the $z_{i,j}$ remaining after taking the quotient of $R$ by $J_{w,h}$. In other words, $H_{R/J_{w,h}}(t)$ is simply the series $\displaystyle\prod_{z_{i,j}\text{ $\nmid$ } \init(g^w_{k,\ell})}(1-t^{\mu_{i,j}})^{-1}$, where the product is over those indices $(i,j)$ for which $z_{i,j}\in\mathbf{z}_w$ does not appear as an initial term of any $g_{k,\ell}$ with $k>h(\ell)$.
\end{remark}

\subsection{Geometric vertex decomposability} \label{subsec: applications GVD}

Knutson, Miller, and Yong related the notion of a vertex decomposition of a simplicial complex to a classical algebraic geometry degeneration technique, and called this a {\bf geometric vertex decomposition} in the case that the degeneration is reduced \cite{KMY}.
Recently, Klein and the fourth author expanded this definition recursively to {\bf geometrically vertex decomposable} ideals, which provides a generalization of vertex decomposable simplicial complexes \cite{KR}.
The condition that a simplicial complex is vertex decomposable can be strengthened to require that the decomposition respects a fixed ordering of the vertices.
This strengthened notion is called compatibly vertex decomposable.
Klein and Rajchgot similarly introduced the notion of {\bf $<$-compatibly geometrically vertex decomposable}, where the degenerations respect a lexicographic monomial order $<$.
In particular, an ideal is $<$-compatibly geometrically vertex decomposable if and only if its initial ideal is the Stanley-Reisner ideal of a compatibly vertex decomposable simplicial complex with respect to a corresponding vertex order \cite[Proposition 2.14]{KR}.

While we refer the reader to \cite{KR} for formal definitions, we provide the following sufficient condition for being geometrically vertex decomposable, which is a straightforward generalization of \cite[Proposition 2.14]{KR}.

\begin{lemma}[{\cite[Lemma 3.6]{DSH}}] \label{lem:irr init implies gvd}
	Let $I \subseteq \mathbb C[\mathbf x]$ be an ideal in a polynomial ring and $<$ a lexicographic monomial order on $\mathbb C[\mathbf x]$.
	If $\init_<(I)$ is an ideal of indeterminates, then $I$ is $<$-compatibly geometrically vertex decomposable.
\end{lemma}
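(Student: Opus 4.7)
The plan is to reduce the statement to the combinatorial characterization of compatibly geometrically vertex decomposable ideals recorded in \cite[Proposition 2.14]{KR}, which says that $I$ is $<$-compatibly geometrically vertex decomposable if and only if $\init_<(I)$ is the Stanley--Reisner ideal of a simplicial complex that is compatibly vertex decomposable with respect to the vertex order on $\{x_1,\dots,x_n\}$ induced by $<$. So the task reduces to (i) identifying the Stanley--Reisner complex of an ideal of indeterminates and (ii) checking that it is compatibly vertex decomposable in any vertex order.

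For (i), write $\init_<(I) = \langle x_{i_1}, \dots, x_{i_k} \rangle$. This is automatically a squarefree monomial ideal, and hence is the Stanley--Reisner ideal of a unique simplicial complex $\Delta$ on the vertex set $\{x_1,\dots,x_n\}$. By definition of the Stanley--Reisner correspondence, the faces of $\Delta$ are precisely the subsets of the vertex set that do not contain any $x_{i_s}$; equivalently, $\Delta$ is the full simplex on the vertex set $\{x_j : j \notin \{i_1,\dots,i_k\}\}$ (with each $x_{i_s}$ a non-vertex of $\Delta$).

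For (ii), I would observe that a simplex is compatibly vertex decomposable with respect to any total ordering of its vertices. This follows by a trivial induction from the recursive definition: deleting any vertex $v$ from a simplex yields another simplex (of one lower dimension), and the link of $v$ is likewise a simplex, so the recursion terminates after finitely many peelings regardless of the order in which the vertices are removed. In particular, $\Delta$ is compatibly vertex decomposable with respect to the vertex order induced by $<$.

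Combining (i) and (ii) with \cite[Proposition 2.14]{KR} yields the conclusion. The main ``step'' is really only bookkeeping: once the combinatorial translation is in place, the argument is immediate, so there is no substantial obstacle to overcome.
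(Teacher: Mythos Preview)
Your proposal is correct and follows exactly the route the paper indicates: the paper does not give its own proof but simply cites \cite[Lemma~3.6]{DSH} and remarks that the result is ``a straightforward generalization of \cite[Proposition~2.14]{KR}.'' Your reduction via \cite[Proposition~2.14]{KR} to the observation that the Stanley--Reisner complex of an ideal of indeterminates is a simplex (hence compatibly vertex decomposable in any order) is precisely that straightforward generalization.
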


We saw in Theorem \ref{thm:subsetGB} that the initial ideal of the regular nilpotent Hessenberg Schubert cell ideal $J_{w,h}$ is an ideal of indeterminates, so the following is immediate from Lemma \ref{lem:irr init implies gvd} upon choosing the lexicographic monomial order $<_n^w$ from Definition \ref{def:cellMonomialOrder}.

\begin{proposition} \label{thm:GVDSchubertCell}
	Let $h$ be an indecomposable Hessenberg function and $w \in {\rm Hess}(\mathsf N, h)^{\mathsf S}$.
	Then $J_{w,h}$ is $<_n^w$-compatibly geometrically vertex decomposable.
\end{proposition}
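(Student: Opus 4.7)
The plan is essentially to combine two ingredients we already have in hand, so the proof should be very short. First I would recall that we have the lexicographic monomial order $<_n^w$ on $\mathbb{C}[\mathbf{z}_w]$ from Definition \ref{def:cellMonomialOrder}, which is precisely the setting in which Lemma \ref{lem:irr init implies gvd} operates: the lemma says that if $\init_<(I)$ is an ideal of indeterminates for some lexicographic order $<$, then $I$ is $<$-compatibly geometrically vertex decomposable.

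Next I would invoke Theorem \ref{thm:subsetGB}, where we proved that the generators $g^w_{k,\ell}$ form a Gr\"obner basis for $J_{w,h}$ with respect to $<_n^w$ and, crucially, that $\init_{<_n^w}(J_{w,h})$ is generated by (distinct) indeterminates. In other words, the hypothesis of Lemma \ref{lem:irr init implies gvd} is satisfied verbatim by the pair $(J_{w,h}, <_n^w)$.

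Applying Lemma \ref{lem:irr init implies gvd} directly then yields the conclusion that $J_{w,h}$ is $<_n^w$-compatibly geometrically vertex decomposable, completing the proof. There is no real obstacle here: the entire technical content was absorbed into Theorem \ref{thm:subsetGB}, and the geometric vertex decomposability is a formal consequence via the general sufficient condition of Klein--Rajchgot restated as Lemma \ref{lem:irr init implies gvd}. The only thing to be mindful of in writing up the proof is to explicitly name the order $<_n^w$ being used, so that the reader sees which monomial order witnesses the $<$-compatibility.
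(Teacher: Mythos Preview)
Your proposal is correct and follows exactly the same approach as the paper: the paper's proof is simply the one-sentence observation that Theorem \ref{thm:subsetGB} shows $\init_{<_n^w}(J_{w,h})$ is an ideal of indeterminates, so the result is immediate from Lemma \ref{lem:irr init implies gvd} with the lexicographic order $<_n^w$.
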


\subsection{Frobenius splitting} \label{subsec: applications Frobenius}

Many well-studied subvarieties of the flag variety are known to be \emph{Frobenius split}, including Schubert varieties and Richardson varieties. 
Furthermore, each Schubert cell in the flag variety has a Frobenius splitting which compatibly splits all opposite Schubert varieties intersected with that Schubert cell. 

In this subsection, we observe that the situation is similar for regular nilpotent Hessenberg varieties. Namely, each Schubert cell in the flag variety has a Frobenius splitting which compatibly splits all the regular nilpotent Hessenberg varieties intersected with that Schubert cell.
This extends \cite[Corollary 5.14]{DSH}.

Let $\mathbb{K}$ be a algebraically closed field of characteristic $p>0$.
A \textbf{Frobenius splitting} of a polynomial ring $R = \mathbb{K}[y_1,\dots, y_n]$ is a map $\phi:R\rightarrow R$ which satisfies the following three conditions: (i) $\phi(a+b) = \phi(a)+\phi(b)$, for all $a,b\in R$, (ii) $\phi(a^pb) = a\phi(b)$, for all $a,b\in R$, and (iii) $\phi(1) = 1$. An ideal $I\subseteq R$ is \textbf{compatibly split} by the Frobenius splitting $\phi:R\rightarrow R$ if $\phi(I)\subseteq I$. We refer the reader to one of the many references on Frobenius splitting for further information on the subject (see e.g., \cite{BrionKumar, SmithZhang}). 

Let $\mathbb{K}[{\bf z}_w]$ denote the coordinate ring of the Schubert cell $X^w_\circ \subseteq GL_n(\mathbb{K})/B$. 
Consider each $g^w_{k,\ell}$ (which has integer coefficients) as an element of $\mathbb{K}[{\bf z}_w]$. Let ${J}_{w,h}\subseteq \mathbb{K}[{\bf z}_w]$ be the ideal generated by these polynomials $g^w_{k,\ell}\in \mathbb{K}[{\bf z}_w]$. Observe that  $J_{w,h}$ is the prime defining ideal of the regular nilpotent Hessenberg Schubert cell $\text{Hess}(\mathsf N,h)\cap X^w_\circ\subseteq GL_n(\mathbb{K})/B$. 

Since the coefficient of each  $\text{in}_{<^w_n}(g^w_{k,\ell})$ is not divisible by any prime $p>0$, we observe that the proof of Theorem \ref{thm: complete intersection} holds in positive characteristic. That is, $J_{w,h}\subseteq \mathbb{K}[{\bf z}_w]$ is a triangular complete intersection, minimally generated by the $g^w_{k,\ell}$. 
Let $G^w$ be the product of all of these minimal generators.
Let $Z$ denote the product of all indeterminates in the polynomial ring $\mathbb{K}[{\bf z}_w]$. 
Define $F^w := \frac{Z}{\text{in}_{<^w_n}(G^w)}G^w$, which is a polynomial since $\text{in}_{<^w_n}(G^w)$ is a squarefree monomial (up to sign), and hence divides $Z$. 
By construction, $G^w$ divides $F^w$. 
Additionally, $\text{in}_{<^w_n}(F^w)$ is equal to $Z$, the product of all the variables in $\mathbb{K}[{\bf z}_w]$.

Let $\mathrm{Tr}:\mathbb{K}[{\bf z}_w]\rightarrow \mathbb{K}[{\bf z}_w]$ be the additive map defined on monomials as follows: given a monomial $m\in \mathbb{K}[{\bf z}_w]$, define $\mathrm{Tr}(m)$ to be $\frac{(mZ)^{1/p}}{Z}$ if $mZ$ is a $p^{\rm th}$ power and $0$ otherwise. 
Given a polynomial $g\in \mathbb{K}[{\bf z}_w]$, define $\text{Tr}(g\hspace{1mm}\bullet)$ to be the map which sends $f\in \mathbb{K}[{\bf z}_w]$ to $\mathrm{Tr}(gf)$.

\begin{theorem}
The map $\mathrm{Tr}\left((F^w)^{p-1}\bullet \right)$ is a Frobenius splitting of $\mathbb{K}[{\bf z}_w]$ which compatibly splits each ideal $J_{w,h}\subseteq \mathbb{K}[{\bf z}_w]$.
\end{theorem}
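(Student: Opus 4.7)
The plan is to verify, beyond the additivity and the Frobenius-type property $\mathrm{Tr}(a^p b) = a\mathrm{Tr}(b)$ of the map $\mathrm{Tr}$ (which follows routinely from the monomial definition), two conditions: (a) $\mathrm{Tr}((F^w)^{p-1}) = 1$, so that $\phi \coloneqq \mathrm{Tr}((F^w)^{p-1}\,\bullet)$ is a Frobenius splitting of $\mathbb{K}[\mathbf z_w]$, and (b) $\phi(J_{w,h}) \subseteq J_{w,h}$ for each indecomposable Hessenberg function $h$.

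For (a), I would exploit the fact, recorded in the discussion preceding the theorem, that $\mathrm{in}_{<_n^w}(F^w) = Z$ with coefficient $1$; since initial terms multiply under products, $\mathrm{in}_{<_n^w}((F^w)^{p-1}) = Z^{p-1}$ occurs with coefficient $1$ in the expansion of $(F^w)^{p-1}$. A monomial $m = \prod z_{i,j}^{a_{i,j}}$ contributes nontrivially to $\mathrm{Tr}$ only when $a_{i,j} \equiv p-1 \pmod p$ for every variable, i.e., when each $a_{i,j} \geq p-1$. The key combinatorial observation is that in the lexicographic order $<_n^w$, any such $m$ satisfying $m \leq_n^w Z^{p-1}$ must equal $Z^{p-1}$ exactly: otherwise, at the lex-first variable where $a_{i,j} \neq p-1$, all earlier exponents equal $p-1$ while this one exceeds $p-1$, forcing $m >_n^w Z^{p-1}$, a contradiction. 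Since every non-initial monomial of $(F^w)^{p-1}$ is strictly less than $Z^{p-1}$ in $<_n^w$, the only nonzero $\mathrm{Tr}$-contribution comes from $Z^{p-1}$ itself, yielding $\mathrm{Tr}((F^w)^{p-1}) = \mathrm{Tr}(Z^{p-1}) = 1$.

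For (b), I would use that each minimal generator $g^w_{k,\ell}$ of $J_{w,h}$ divides $G^w$ by construction, and $G^w$ divides $F^w$. Hence for any $f \in \mathbb{K}[\mathbf z_w]$,
\[
\phi(g^w_{k,\ell} \cdot f)
= \mathrm{Tr}\!\left((g^w_{k,\ell})^p \cdot (F^w/g^w_{k,\ell})^{p-1} \cdot f\right)
= g^w_{k,\ell} \cdot \mathrm{Tr}\!\left((F^w/g^w_{k,\ell})^{p-1} \cdot f\right) \in J_{w,h},
\]
by the Frobenius-type property of $\mathrm{Tr}$. Since the $g^w_{k,\ell}$ generate $J_{w,h}$ and $\phi$ is additive, we conclude $\phi(J_{w,h}) \subseteq J_{w,h}$. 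The only nontrivial step is the combinatorial claim in (a), which is essentially forced by the lexicographic nature of $<_n^w$ together with $Z^{p-1}$ being the initial term of $(F^w)^{p-1}$; everything else is formal.
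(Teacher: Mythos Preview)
Your argument is correct. The paper's own proof consists of a single sentence citing \cite[Theorem~5.8 and Corollary~5.9]{DSH}, so what you have done is unpack that reference: part~(a) is the standard lex-order computation showing $\mathrm{Tr}((F^w)^{p-1})=1$ whenever the initial monomial of $F^w$ is the full product of variables with unit coefficient, and part~(b) is the usual divisibility trick showing that $\mathrm{Tr}(g^{p-1}\,\bullet)$ compatibly splits any principal ideal generated by a factor of $g$, extended additively to $J_{w,h}$. These are precisely the general statements recorded in \cite{DSH} that the paper invokes, so your approach and the paper's coincide; you simply supply the details rather than cite them.
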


\begin{proof}
This is immediate from \cite[Theorem 5.8 and Corollary 5.9]{DSH}.
\end{proof}

\bibliographystyle{plain}

\end{document}